\numberwithin{equation}{section}
\newcommand\asertion[1]{ssertion $({\mathrm{\romannumeral #1\relax}})$}
\newcommand{\ass}{associator}
\newcommand{\almost}{para-}
\newcommand{\Hom}{\text{Hom}}
\newcommand{\End}{\text{End}}
\newcommand{\spc}{\mathbb{C}}
\newcommand{\spr}{\mathbb{R}}
\newcommand{\spo}{\mathbb{O}}
\newcommand\huaa[1]{\mathscr{A}(#1)}
\newcommand\hua[3]{\mathscr{#1}^{#2}(#3)}
\newcommand{\re}{\text{Re}\,}%
\newcommand\fx[2]{\left<#1,#2\right>}%
\newcommand\fsh[1]{\left|\left|#1\right|\right|}%
\def\O{\mathbb{O}}
\def\R{\mathbb{R}}
\def\abs#1{\left|#1\right|}
\newcommand\clifd[1]{C\ell_{#1}}
\newtheorem{mydef}{Definition}[section]
\newtheorem{rem}[mydef]{Remark}
\newtheorem{eg}[mydef]{Example}
\newtheorem{cor}[mydef]{Corollary}
\newtheorem{lemma}[mydef]{Lemma}
\newtheorem{thm}[mydef]{Theorem}
\newtheorem{step }[stp]{Step }
\begin{document}
\title[Structure of octonionic Hilbert spaces]{ Structure  of  octonionic Hilbert spaces with applications in  the Parseval   equality and   Cayley-Dickson algebras }	
\author{Qinghai Huo}
\email[Q.~Huo]{hqh86@mail.ustc.edu.cn}
\address{Department of Mathematics, University of Science and Technology of China, Hefei 230026, China}

\author{Guangbin Ren}
\email[G.~Ren]{rengb@ustc.edu.cn}
\address{Department of Mathematics, University of Science and Technology of China, Hefei 230026, China}
\date{}
\keywords{Octonionic Hilbert space; sedenions; Cayley-Dickson algebras;
	tensor product; Parseval's   equality}
	\subjclass[2010]{Primary: 17A60;46S10}
	
	\thanks{This work was supported by the NNSF of China (11771412).}
	
	\begin{abstract} Contrary to the simple  structure  of the  tensor product of the quaternionic Hilbert space,  the octonionic situation  becomes more involved.
It turns out that an octonionic Hilbert space  can be decomposed as     an orthogonal direct sum of two subspaces, each of them  isomorphic to a tensor product of an irreducible octonionic Hilbert space with a real Hilbert space.
 As an application,
 we find that  for a given  orthogonal basis   the octonionic Parseval equality holds
 if and only if the basis  is   weak associative.
  Fortunately, there   always exists a  weak associative orthogonal basis
  in an  octonionic Hilbert space.  This completely removes the obstacles
 caused by the failure of the octonionic Parseval equality.
 	As another application,	we provide a new approach to study
the   Cayley-Dickson algebras, which turn out to be   specific  examples of octonionic Hilbert spaces.  An explicit weak associative orthonormal basis is constructed in  each    Cayley-Dickson algebra.

	\end{abstract}

\maketitle
	
	\tableofcontents
	
	
	\section{Introduction}
	The theory of octonionic Hilbert spaces is initiated  by Goldstine and  Horwitz \cite{goldstine1964hilbert} in 1964. It has many developments  in the spectral theory \cite{ludkovsky2007Spectral},  operator theory \cite{ludkovsky2007algebras},  and mathematical   physics  \cite{deleo1996oqm,gunaydin1973quark,gunaydin1976OHilbert,Rembielinski1978tensorOHilbert}.

However, for the normed division algebras, many results are restricted to  the   quaternionic case and remain  open in the octonionic case \cite{Voight2021}.

A   quaternionic Hilbert space is of  a simple structure:

$\bullet$   A Hilbert  quaternionic bimodule is the tensor product of the quaternion algebra $\mathbb H$   with a real Hilbert space  \cite{ng2007quaternionic}.

$\bullet$
A Hilbert left quaternionic module  is also a tensor product up to an isomorphism. This is because the compatible right multiplication  always exits which  depends on a   choice of a   Hilbert basis  as shown in \cite[Section 3.1]{ghiloni2013slicefct}.

	  A natural question   for the octonionic case is whether     a Hilbert left octonionic module is the tensor product of the  octonionic algebra $\mathbb O$ with some real Hilbert space.
If the answer  was  affirmative, then it would extremely simplify the theory of   Hilbert left octonionic modules. We refer to
 \cite{ludkovsky2007algebras} for such a development of the theory of octonionic Hilbert spaces under the assumption of  such a  tensor product decomposition.

In this article, we give  a negative answer to the question above.  It turns out that  a Hilbert  left octonionic module can be decomposed into an orthogonal sum of two subspaces, each with a tensor product decomposition.


  To state our results in detail, we first  recall the definition of  Hilbert left $\O$-modules due to  Goldstine and Horwitz \cite{goldstine1964hilbert}.

  \begin{mydef}[\cite{goldstine1964hilbert}]\label{def:gold}
	A Hilbert left $\O$-module	  $H$ is a left $\O$-module with an  $\O$-inner product  $$\left\langle\cdot,\cdot \right\rangle :H\times H \rightarrow \mathbb{O}$$
	such that $(H, \re \left\langle\cdot,\cdot \right\rangle)$ is a real Hilbert space. Here the $\O$-inner product satisfies the following axioms for all $ u,v\in H$ and $p\in \O$:
	\begin{enumerate}[label=(\alph*)]
		\item  $\left\langle u+v,w\right\rangle=\left\langle u,w\right\rangle+\left\langle v,w\right\rangle$;
		\item  $\left\langle u ,v\right\rangle=\overline{\left\langle v ,u\right\rangle}$;
		\item  $\left\langle u ,u\right\rangle\in \spr^+$;  and $\left\langle u ,u\right\rangle=0$ if and only if $u=0$;
		\item $\left\langle tu ,v\right\rangle=t\left\langle u ,v\right\rangle$ for $t\in \R$;
		\item $\re \left\langle pu ,v\right\rangle=\re (p\left\langle u ,v\right\rangle)$;
		\item $\left\langle pu ,u\right\rangle=p\left\langle u ,u\right\rangle$.
	\end{enumerate}	
\end{mydef}

Recently, we find that   axiom $(f)$ is   non-independent  \cite{huoqinghai2021Riesz}. Since the functionals induced from the $\mathbb O$-inner product   are not $\mathbb O$-linear, this motivates us to  introduce the para-linearity instead \cite{huoqinghai2021Riesz}.
It turns out that  para-linearity is the main object of   octonionic functional analysis,  other than
  octonionic linearity.

 We first consider the underline module  structure
	of  a Hilbert left $\O$-module $H$.  We refer to \cite{Shestakov1974rightrep,zhevlakov1982Rings} for the structure of alternative modules, which take $\O$-modules as specific examples.

It is known that there are only two  irreducible left $\O$-modules, i.e.,   $\O$ and   $\overline{\O}$. Here  the conjugate regular module $\overline{\O}$ coincides with
		$\mathbb O$ as a set and its
		 left module structure  is   defined by
		$$p\hat{\cdot}x:=\overline{p}x,$$
		for any $p\in \spo$ and   $x\in \spo$.
Moreover,   any left $\O$-module $H$  admits
a decomposition   \cite{huo2021leftmod}
\begin{eqnarray}\label{ eq:dec-mod-282}H=\spo\huaa{H}\oplus {\spo}\hua{A}{-}{H},\end{eqnarray}
where     $$\huaa{H}:=\{m\in H\mid (pq)m-p(qm)=0\text { for all } p,q \in \O\}$$
is the  set of \textbf{associative elements}, called
the \textbf{nucleus}  of  $M$,  and
$$\hua{A}{-}{H}:=\{m\in H\mid (pq)m-q(pm)=0\text { for all } p,q \in \O\}$$ is the  set of \textbf{conjugate associative elements}.
  Here, we denote $$\mathbb OU:=\left\{\sum_{i=1}^Np_iu_i\mid p_i\in \mathbb O, \ u_i\in U,\ N\in \mathbb{N} \right\} $$
   for any    $U\subseteq H$.

 With the $\O$-module structure of $\O$ and $\overline{\O}$ at hands,  we can endow   a natural $\O$-module structure on the set
 \begin{eqnarray}\label{eq:tensor-pr-dec-283}\Big(\O\otimes_\R\huaa{H}\Big)
	\oplus
	\Big(\overline{\O}\otimes_\R\hua{A}{-}{H}\Big)\end{eqnarray}
  The decomposition (\ref{ eq:dec-mod-282})  can be   modified    as
  $$H\cong \Big(\O\otimes_\R\huaa{H}\Big)
	\oplus
	\Big(\overline{\O}\otimes_\R\hua{A}{-}{H}\Big),$$
  which is a natural isomorphism of $\O$-modules.

 Notice that this isomorphism is only at the level of left $\mathbb O$-modules. We need to
seek further an isomorphism at the level of Hilbert left $\mathbb O$-modules.
To this end, we will
	endow the left $\O$-module  in (\ref{eq:tensor-pr-dec-283})
  with a canonical $\O$-inner product.
It is  shown that the submodules in the two summands in
(\ref{eq:tensor-pr-dec-283})
are orthogonal to each other, and
the $\mathbb O$-inner products restricted to both $\huaa{H}$ and  $ \hua{A}{-}{H}$
are all   real-valued, respectively.
This leads to  the tensor product decomposition of Hilbert left $\O$-modules.

\begin{thm}[\textbf{Tensor product decomposition}]{}\label{thm:decomp-tp-281}
	Let $H$ be a Hilbert left $\O$-module. Then  there exists an $\O$-isomorphism of $\O$-Hilbert spaces
	\begin{eqnarray}\label{eq:intro hilbert space}
	H\cong  \Big(\O\otimes_\R\huaa{H}\Big)
\oplus
\Big(\overline{\O}\otimes_\R\hua{A}{-}{H}\Big).
	\end{eqnarray}

\end{thm}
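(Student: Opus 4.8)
The plan is to take the $\O$-module isomorphism established above (the map $\Phi$ sending $p\otimes u\mapsto pu$ on the nucleus summand and $q\otimes v\mapsto qv$ on the conjugate summand) as given, and to upgrade it to an isometry. The whole task then reduces to four facts about the original $\O$-inner product of $H$: (i) for associative $u$ the left scalar factors out of the first slot, $\langle qu,w\rangle=q\langle u,w\rangle$; (ii) the inner product is \emph{real-valued} on $\huaa{H}$ and on $\hua{A}{-}{H}$; (iii) the summands $\spo\huaa{H}$ and $\spo\hua{A}{-}{H}$ are mutually orthogonal; and (iv) the scaling laws $\langle su,tv\rangle=s\bar t\langle u,v\rangle$ on $\huaa{H}$ together with the conjugate analogue on $\hua{A}{-}{H}$. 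Granting these, I would put the canonical inner product on the right-hand side of \eqref{eq:intro hilbert space} and check that $\Phi$ preserves it.

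The engine is fact (i), which I would prove first. Since $u\in\huaa{H}$ means $(pq)u=p(qu)$ for all $p,q\in\O$, pairing these equal vectors with an arbitrary $w$ and taking real parts gives, by axiom~(e), $\re\big((pq)\langle u,w\rangle\big)=\re\big(p\langle qu,w\rangle\big)$ for all $p$. Because the real part is associative on $\O$, i.e.\ $\re((ab)c)=\re(a(bc))$, the left-hand side equals $\re\big(p(q\langle u,w\rangle)\big)$; since $p\mapsto\re(px)$ is a nondegenerate real pairing, cancelling $p$ yields $\langle qu,w\rangle=q\langle u,w\rangle$. Conjugating (axiom~(b)) gives the second-slot form $\langle w,qu\rangle=\langle w,u\rangle\bar q$ when the second entry is associative. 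The identical computation run with the defining identity $(pq)v=q(pv)$ of $\hua{A}{-}{H}$ produces the mirrored law $\langle qv,w\rangle=\langle v,w\rangle q$ for conjugate associative $v$; this built-in reversal of handedness is exactly what will distinguish the $\overline{\O}$-factor from the $\O$-factor.

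The main obstacle is to pass from these real-part, one-slot identities to the \emph{full} octonion-valued facts (ii) and (iii), since the axioms control only real parts (axiom~(e)) and the diagonal (axiom~(f)). I would extract them by evaluating one mixed product two ways. For associative $u,v$, fact~(i) in the first slot followed by the second-slot form gives $\langle su,tv\rangle=s\big(\langle u,v\rangle\bar t\big)$, whereas applying axiom~(b) first and then fact~(i) gives $\langle su,tv\rangle=\big(s\langle u,v\rangle\big)\bar t$; equating them forces the associator $[s,\langle u,v\rangle,\bar t]$ to vanish for all $s,t$. As the nucleus of $\O$ is precisely $\R$, this pins $\langle u,v\rangle\in\R$, whence the scaling law $\langle su,tv\rangle=s\bar t\langle u,v\rangle$ of (iv) is immediate; the conjugate summand is treated identically, giving real values and $\langle sv,tv'\rangle=\bar t s\,\langle v,v'\rangle$. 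For orthogonality I run the same two-way evaluation on a mixed pair $u\in\huaa{H}$, $v\in\hua{A}{-}{H}$ with $c=\langle u,v\rangle$: the associative law gives $\langle su,tv\rangle=s(\bar t c)$ and the conjugate law gives $\langle su,tv\rangle=\bar t(sc)$, so $s(\bar t c)=\bar t(sc)$ for all $s,t$; testing two anticommuting imaginary units shows each component of $c$ must vanish, so $c=0$ and, after scaling, $\langle su,tv\rangle=0$.

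With (i)--(iv) in hand the assembly is routine. On $\O\otimes_\R\huaa{H}$ I would set $\langle p\otimes u,q\otimes v\rangle:=p\bar q\,\langle u,v\rangle$, where $\langle u,v\rangle\in\R$ makes $\huaa{H}$ a genuine real Hilbert space, and on $\overline{\O}\otimes_\R\hua{A}{-}{H}$ the conjugate-twisted version dictated by the mirrored law; checking these against Definition~\ref{def:gold} is direct, with the identity $(xy)\bar y=|y|^2x$ supplying axiom~(f), and completeness is inherited from the real factors. Declaring the two summands orthogonal yields the $\O$-Hilbert space on the right of \eqref{eq:intro hilbert space}. Finally, facts~(iv) and~(iii) say exactly that $\Phi$ carries this inner product to that of $H$: on each summand $\langle\Phi(p\otimes u),\Phi(q\otimes v)\rangle_H=\langle pu,qv\rangle=p\bar q\langle u,v\rangle$, and across summands the value is $0$. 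Hence $\Phi$ is an $\O$-linear isometric bijection, i.e.\ an isomorphism of $\O$-Hilbert spaces, which is the assertion.
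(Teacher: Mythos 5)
Your proof is correct, and while its overall architecture coincides with the paper's (first the module decomposition of Theorem \ref{thm:M=OA+OA^-}, then real-valuedness of the inner product on $\huaa{H}$ and on $\hua{A}{-}{H}$, orthogonality of the two summands, and the scaling law $\fx{pu}{qv}=p\overline{q}\fx{u}{v}$), the technical engine is genuinely different at every step. Where the paper obtains your fact (i) by citing Lemma \ref{lem:lem57} from \cite{huoqinghai2021Riesz}, you reprove the needed direction on the spot from axiom (e), the associativity of the octonionic trace form $\re((ab)c)=\re(a(bc))$, and the nondegeneracy of $p\mapsto \re(pa)$ --- a clean, self-contained shortcut. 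Where the paper deduces $\fx{u}{v}\in\R$ on the nucleus from the second-associator identity by showing $[p,q,\fx{v}{u}]=0$, you compare the two bracketings of $\fx{su}{tv}$ and use that the (middle) nucleus of $\O$ is exactly $\R$; these are equivalent in substance but yours bypasses the associator calculus. For orthogonality the paper reduces to the functional equation $pf(q)=f(qp)$ and invokes the Schur-type Lemma \ref{lem:f=0 f(xq)=qf(x)} (ultimately Clifford representation theory), whereas you kill $c=\fx{u}{v}$ from $s(\overline{t}c)=\overline{t}(sc)$ by testing anticommuting imaginary units against the linearized left-alternative law $s(tc)+t(sc)=(st+ts)c$ --- strictly more elementary. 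Finally, the paper treats the $\overline{\O}$ summand by passing to the dual Hilbert module $H^-$ and reducing to the nucleus case, while you work directly with the mirrored law $\fx{qv}{w}=\fx{v}{w}q$. The trade-off: the paper's route builds reusable machinery (the second-associator identities and the $H^-$ duality reappear in the Parseval section), while yours is shorter, needs nothing beyond the axioms and elementary octonion identities, and makes the theorem independent of the companion paper.
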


Theorem \ref{thm:decomp-tp-281} can be applied to study   the  Parseval theorem in $\O$-Hilbert spaces. 	As observed by Goldstine and Horwitz  in the appendix of \cite{goldstine1964hilbert2},   the  Parseval equality may fail   for arbitrary orthonormal basis in an $\O$-Hilbert space. We find for a given orthonormal basis, the  related Parseval equality holds
if and only if the orthonormal basis is \textbf{weak associative} (see Definition \ref{def:weak ass}). Fortunately, a  weak associative orthonormal basis   always exists in an $\O$-Hilbert space. This makes the Parseval equality   applicable in the octonionic setting.

Theorem \ref{thm:decomp-tp-281} can also be applied to study the Cayley-Dickson algebra $\mathbb A_n$. We refer to  \cite{Dickson1919cdalg,harvey1990spinors} for the theory of Cayley-Dickson algebras. In this article, we introduce a new approach to study
 the Cayley-Dickson algebra $\mathbb A_n$. That is, we regard  $\mathbb A_n$ as a Hilbert left $\O$-module when $n\geqslant 4$.  It leads to   an isomorphism of Hilbert left $\O$-modules  $$\mathbb{A}_n\cong \O^{2^{n-4}}\oplus \overline{\O}^{2^{n-4}}.$$
In particular, for the algebra of sedenions $\mathbb S$, we have
   $$\mathbb S=\mathbb O\oplus  \overline{\mathbb O}.$$
 We can construct an explicit weak associative orthonormal basis in $\mathbb A_n$. {Note that a Cayley-Dickson algebra is generally not a quaternionic vector space. The fact that
 $\mathbb A_n$ is  a Hilbert left $\O$-module
  provides a new method to study Cayley-Dickson algebras.}

	\section{Preliminaries}\label{sec:preliminary}
	In this section we review 	
	some definitions and basic properties about the  algebra $\O$ of octonions,  $\O$-modules  and $\O$-Hilbert spaces.

	\subsection{Octonions}\label{subsec:O}
	The algebra $\spo$ of octonions     is the $8$-dimensional  non-associative, non-commutative, normed division algebra over  $\spr$. For convenience, we  denote  $ e_0=1$. Let     $e_0,e_1,\dots,e_7$ be a basis of $\O$ as a real vector space subject to  the multiplication rule
		\begin{align}\label{eq:epsilon notation}
	&e_ie_j=\epsilon_{ijk}e_k-\delta_{ij}, \quad i,j=1,\dots,7,
	\end{align}
	where $\delta_{ij}$ is the Kronecker delta  and $\epsilon_{ijk}$ is a completely skew-symmetric  with value 1 precisely when $$ijk = 123, 145, 176, 246, 257, 347, 365.$$



	
	An octonion can be written as $$x=x_0+\sum_{i=1}^7x_ie_i,$$
	where $ x_i\in\spr$ for $i=0,\dots,7$.
	Its conjugate   is defined by $$\overline{x}:=x_0-\sum_{i=1}^7x_ie_i,$$  its norm  equals $|x|=\sqrt{x\overline{x}}\in \spr$, and its real part  is $\re{x}=x_0=\frac{1}{2}(x+\overline{x})$.
	We denote by $\mathbb{S}_6$ the set of imaginary units in $\O$, i.e.,
	$$\mathbb{S}_6:=\{J\in \O\mid J^2=-1\}.$$
	Then there is a book structure on octonions (see \cite{Jinming2020slicedirac}):$$\O=\bigcup  \mathbb{H}_{I,J},$$
	where  the sum runs over all $I, J\in \mathbb S_6$ which are mutually orthogonal
	and $\mathbb{H}_{I,J}$ denotes the quaternionic algebra  spanned by
	$\{1,I,J,IJ\}$.
	


	\subsection{$\O$-modules}
 An  octonionic module is   a specific alternative module	since the octonionic algebra  is an alternative algebra. The general representation theory over alternative algebras has been fully studied; see \cite{Schafer1952repaltalg,jacobson1954structure,Shestakov1974rightrep,Shestakov2016bimod,huo2021leftmod}.  We   recall  some basic notations and  results on $\O$-modules in this subsection.




	 \begin{mydef}\label{def: 1 alternative algebra}

	 	An $\R$-vector space $M$ is called a \textbf{left $\O$-module}, if there is an  $\R$-linear map
	 	\begin{eqnarray*}
	 		L:  \O&\rightarrow &\End_{\R}M\\
	 		p&\mapsto& L_p
	 	\end{eqnarray*}
	 	satisfying  $L_1=id_M$
	 	and
	 	\begin{eqnarray}\label{eq:left ass}
	 	[p,q,x]=-[q,p,x],
	 	\end{eqnarray}
	 	for all $ p,q\in \O$ and $x\in M$.
	 	Here  $$[p,q,x]:=(pq)x-p(qx)=L_{pq}(x)-L_pL_q(x)$$ is called the left \ass\ of $M$.
	 	The definition of  right $\O$-module  is similar.

	 \end{mydef}
%
%
%
%
	It is a useful fact that
  \eqref{eq:left ass} is  equivalent to the following statement
	$$r(rm)=r^2m $$
for all 	$r\in \O$ and all $ m\in M$.
%


The  structure of  a left $\O$-module is characterized by its associative elements and conjugate associative elements as follows.  The  \textbf{associative elements}  of a left $\O$-module $M$  form a set $$\huaa{M}:=\{m\in M\mid [p,q,m]=0\text { for all } p,q \in \O\},$$
 called the  \textbf{nucleus} of  $\O$-module $M$.
The \textbf{conjugate  associative elements}  of a left $\O$-module $M$  form a set denoted by  $$\hua{A}{-}{M}:=\{m\in M\mid (pq)m=q(pm)\text { for all } p,q \in \O\}.$$

	\begin{thm}[\cite{huo2021leftmod}]\label{thm:M=OA+OA^-}
		Let $M$ be a left $\O$-module. Then  $$M=\spo\huaa{M}\oplus {\spo}\hua{A}{-}{M}.$$
	\end{thm}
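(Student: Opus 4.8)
The plan is to recognize a left $\spo$-module as nothing more than a module over the Clifford algebra $\clifd{0,7}$ and then to split it by the central volume element. Write $J_i:=L_{e_i}$ for $i=1,\dots,7$; since $L$ is $\R$-linear with $L_1=\mathrm{id}$, the whole module structure is encoded in the tuple $(J_1,\dots,J_7)$. First I would feed the identity $r(rm)=r^2m$ (equivalent to \eqref{eq:left ass}) into two choices of $r$: taking $r=e_i$ gives $J_i^2=L_{e_i^2}=-\mathrm{id}$, and taking $r=e_i+e_j$ with $i\neq j$ and expanding, the cross terms force $J_iJ_j+J_jJ_i=0$. Thus the $J_i$ satisfy the Clifford relations $J_iJ_j+J_jJ_i=-2\delta_{ij}\,\mathrm{id}$, so $M$ is a left $\clifd{0,7}$-module, and conversely every such module is a left $\spo$-module. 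The two notions therefore coincide, and I may import the structure theory of $\clifd{0,7}$.

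Next I would introduce the volume element $\omega:=J_1J_2\cdots J_7$. The Clifford relations give $\omega^2=\mathrm{id}$, and because $7$ is odd, moving any $J_k$ across $\omega$ produces the sign $(-1)^{6}=1$, so $\omega$ is central; in particular it commutes with every $L_p$. Hence $P_\pm:=\tfrac12(\mathrm{id}\pm\omega)$ are complementary idempotents commuting with the $\spo$-action, and they split $M$ into left $\spo$-submodules
$$M=M_+\oplus M_-,\qquad M_\pm=\ker(\omega\mp\mathrm{id}).$$

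I would then match the summands with the two spans. Using $e_ie_j=\epsilon_{ijk}e_k$, one sees $m\in\huaa{M}$ iff $J_iJ_jm=\epsilon_{ijk}J_km$ for all $i\neq j$, while $m\in\hua{A}{-}{M}$ iff $J_iJ_jm=-\epsilon_{ijk}J_km$. For associative $m$ the operators $L_{e_i}$ compose according to the octonion product, so collapsing from the inside gives $\omega m=L_{e_1(e_2(\cdots(e_6e_7)))}m$; the concrete octonionic computation $e_1(e_2(\cdots(e_6e_7)))=-1$ then yields $\omega m=-m$, whence $\huaa{M}\subseteq M_-$. For conjugate associative $m$ the composition reverses the product, giving $\omega m=L_{((((((e_7e_6)e_5)e_4)e_3)e_2)e_1)}m=+m$, so $\hua{A}{-}{M}\subseteq M_+$. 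As $M_\pm$ are $\spo$-submodules, this already gives $\spo\huaa{M}\subseteq M_-$ and $\spo\hua{A}{-}{M}\subseteq M_+$.

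Finally I would establish the reverse inclusions via semisimplicity. The algebra $\clifd{0,7}\cong M_8(\spr)\oplus M_8(\spr)$ is semisimple with exactly two simple modules, each of real dimension $8$ and distinguished by the scalar $\pm1$ by which the central $\omega$ acts; by the previous step these are realized by $\spo$ (where $\omega=-1$) and $\conjgt{\spo}$ (where $\omega=+1$). Every module is therefore a direct sum of copies of $\spo$ and $\conjgt{\spo}$, so $M_-$ is a direct sum of copies of $\spo$ and $M_+$ a direct sum of copies of $\conjgt{\spo}$. Since $\huaa{\spo}=\spr$ and $\spo\cdot\spr=\spo$, reading off associative elements summand by summand gives $\spo\huaa{M_-}=M_-$, and likewise $\spo\hua{A}{-}{M_+}=M_+$; combined with $\huaa{M}\subseteq M_-$ and $\hua{A}{-}{M}\subseteq M_+$ this forces $M_-=\spo\huaa{M}$ and $M_+=\spo\hua{A}{-}{M}$. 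Directness is automatic from $M_+\cap M_-=0$, so $M=\spo\huaa{M}\oplus\spo\hua{A}{-}{M}$. The main obstacle is precisely this last identification: one must use that $\clifd{0,7}$ is semisimple with exactly the two simple modules $\spo$ and $\conjgt{\spo}$ separated by $\omega$, and one must fix the correct sign through the explicit relation $e_1\cdots e_7=-1$; the rest of the argument is formal.
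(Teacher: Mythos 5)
Your proof is correct, and it follows essentially the route the paper itself relies on: Theorem \ref{thm:M=OA+OA^-} is quoted from \cite{huo2021leftmod} without proof, and the surrounding text identifies the key mechanism as exactly the equivalence between left $\O$-modules and left $\clifd{7}$-modules that you set up via the operators $J_i=L_{e_i}$, the central volume element, and the two simple modules $\O$ and $\conjgt{\O}$. Your sign computations ($e_1(e_2(\cdots(e_6e_7)))=-1$ and its reversal $=+1$) check out against the multiplication table \eqref{eq:epsilon notation}, so no gaps.
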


	The category of left $\spo$-modules is shown to be isomorphic to the category of left $\clifd{7}$-modules \cite{huo2021leftmod}. For a left $\O$-module $M$, we can endow it with a natural $\clifd{7}$-module structure, denoted by $M_{\clifd{7}}$. Let $M$, $M'$ be two left $\O$-module. We denote
	$$\Hom_\spo(M,M'):=\{f\in\Hom_{\R}(M,M') \mid f(px)=pf(x) \text{ for all } p\in \O\}.$$
	Then it follows from  Schur's lemma that
	\begin{eqnarray}
\Hom_\spo(\O,\overline{\O})\cong\Hom_{\clifd7}(\O_{\clifd7},\overline{\O}_{\clifd7})=\{0\}.
	\end{eqnarray}
	We restate this fact in a more  precise way as follows.
	
	\begin{lemma}\label{lem:f=0 f(px)=overline(p)x}
		If $f\in End_\spr(\spo)$ satisfies $f(px)=\overline{p}f(x)$ for all $p,x \in \spo$, then $f=0$.
	\end{lemma}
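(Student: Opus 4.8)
The plan is to collapse the functional equation into a single condition on the octonion $c := f(1)$ and then show $c = 0$ by exploiting alternativity. First I would evaluate the hypothesis at $x = 1$; since $\spo$ is unital with $e_0 = 1$ and $L_1 = \mathrm{id}$, this gives $f(p) = \conjgt{p}\,f(1) = \conjgt{p}\,c$ for every $p \in \spo$. Thus $f$ is completely determined by $c$, and $f = 0$ is equivalent to $c = 0$. Substituting $f(x) = \conjgt{x}c$ back into $f(px) = \conjgt{p}f(x)$ and using that conjugation is an anti-automorphism, so $\overline{px} = \conjgt{x}\,\conjgt{p}$, I obtain $(\conjgt{x}\,\conjgt{p})c = \conjgt{p}(\conjgt{x}c)$ for all $p,x$. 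Relabelling $a = \conjgt{x}$ and $b = \conjgt{p}$, which again range over all of $\spo$, the hypothesis is equivalent to the single identity
\begin{equation}\label{eq:forcing-c}
(ab)c = b(ac)\qquad\text{for all }a,b\in\spo.
\end{equation}

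Next I would show that \eqref{eq:forcing-c} forces $c = 0$. Writing $[a,b] := ab - ba$ and recalling that the octonionic \ass\ $[a,b,c]$ is alternating, the identity rewrites as $[a,b]c = [a,b,c]$. The key structural input is that $[a,b,c]$ vanishes whenever $a,b,c$ lie in a common quaternionic subalgebra $\mathbb H_{I,J}$, whereas $[a,b]c$ need not. Concretely, fixing an associative triple, say $a = e_1$, $b = e_2$ (so $e_1e_2 = e_3$ and $[e_1,e_2] = 2e_3$), I would expand both sides of $2e_3 c = [e_1,e_2,c]$ against the basis $e_0,\dots,e_7$. A direct computation with the multiplication rule \eqref{eq:epsilon notation} shows that the components of $c$ lying outside $\mathbb H_{e_1,e_2} = \langle 1,e_1,e_2,e_3\rangle$ appear identically on both sides and hence cancel, while those inside appear only in $2e_3 c$; comparing the real, $e_1$, $e_2$, $e_3$ parts then yields $c_0 = c_1 = c_2 = c_3 = 0$. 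Repeating this with two further triples whose quaternionic subalgebras, together with the first, exhaust the imaginary directions (for instance $\{e_4,e_5\}$ and $\{e_6,e_7\}$, using the book structure $\spo = \bigcup \mathbb H_{I,J}$) kills the remaining coordinates, giving $c = 0$ and hence $f = 0$.

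The main obstacle is the bookkeeping in the coordinate computation of the second step: one must verify that the out-of-subalgebra terms really do cancel and that the chosen triples cover all eight coordinate directions. This can be streamlined either by a single explicit evaluation using the list of triples $123, 145, 176, 246, 257, 347, 365$, or, more conceptually, by invoking the transitivity of $G_2 = \mathrm{Aut}(\spo)$ on such configurations to reduce every triple to the representative $(e_1,e_2)$. At the abstract level the statement is precisely the vanishing of $\Hom_\spo(\spo,\conjgt{\spo})$ recorded just above, so an alternative route is to cite the category equivalence with $\clifd7$-modules together with Schur's lemma; I would nonetheless prefer the elementary argument here, since it is self-contained and makes the role of non-associativity explicit.
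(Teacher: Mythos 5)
Your argument is correct, but it reaches the lemma by a genuinely different route from the paper. The paper gives no computational proof at all: it observes that a real-linear $f$ with $f(px)=\overline{p}f(x)$ is precisely an element of $\Hom_{\O}(\O,\overline{\O})$, transports this to $\Hom_{\clifd7}(\O_{\clifd7},\overline{\O}_{\clifd7})$ via the category isomorphism between left $\O$-modules and left $\clifd7$-modules, and concludes that this space is $\{0\}$ by Schur's lemma, since $\O$ and $\overline{\O}$ are the two non-isomorphic irreducibles; the lemma is then explicitly described as a restatement of that fact. Your elementary argument checks out at every step: the reduction to $c=f(1)$ and the identity $(ab)c=b(ac)$ is right, the rewriting as $[a,b]c=[a,b,c]$ correctly uses the hypothesis twice (once with $a$ and $b$ interchanged to replace $a(bc)$ by $(ba)c$), and the splitting of $c$ along $\mathbb{H}_{e_1,e_2}\oplus\mathbb{H}_{e_1,e_2}^{\perp_\R}$ works as claimed: for $x$ orthogonal to $\mathbb{H}_{e_1,e_2}$ one has $e_2e_1=-e_1e_2$ and $(e_1e_2)x=-e_1(e_2x)$, so both sides equal $2(e_1e_2)x$ and the equation is vacuous there, while on $\mathbb{H}_{e_1,e_2}$ the associator vanishes and $2e_3c'=0$ forces $c'=0$; the three pairs $(e_1,e_2)$, $(e_4,e_5)$, $(e_6,e_7)$ generate quaternion subalgebras whose union spans all eight coordinates. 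What your approach buys is a self-contained proof that makes the role of non-associativity explicit and requires no representation theory; what the paper's approach buys is brevity and the structural explanation (non-isomorphic irreducible modules) that it reuses elsewhere. You correctly identify this dichotomy yourself in your closing remarks.
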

	%
	A simple variant of Lemma \ref{lem:f=0 f(px)=overline(p)x} is as follows.
	\begin{lemma}\label{lem:f=0 f(xq)=qf(x)}
		Let $f\in \End_{\R}(\O)$. If  it holds $f(xq)=qf(x)$ for all $q,x\in \O$, then $f=0$.
	\end{lemma}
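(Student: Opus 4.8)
The plan is to deduce this lemma from Lemma \ref{lem:f=0 f(px)=overline(p)x} by exploiting the fact that conjugation on $\O$ is an $\R$-linear anti-automorphism, so that it interchanges left and right multiplication while introducing a conjugate. Concretely, I would introduce the auxiliary map $g\in\End_\R(\O)$ defined by $g(x):=f(\overline{x})$. Since both $f$ and conjugation are $\R$-linear, $g$ is again $\R$-linear, hence $g\in\End_\R(\O)$; moreover $g=0$ if and only if $f=0$, because $x\mapsto\overline{x}$ is a bijection of $\O$ and $f(y)=g(\overline{y})$.

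Next I would check that $g$ satisfies exactly the hypothesis of Lemma \ref{lem:f=0 f(px)=overline(p)x}. Using the anti-automorphism property $\overline{pz}=\overline{z}\,\overline{p}$ together with the assumed identity $f(xq)=qf(x)$, applied with $x=\overline{z}$ and $q=\overline{p}$, one computes
\begin{equation*}
g(pz)=f(\overline{pz})=f(\overline{z}\,\overline{p})=\overline{p}\,f(\overline{z})=\overline{p}\,g(z)
\end{equation*}
for all $p,z\in\O$. Thus $g$ obeys $g(pz)=\overline{p}\,g(z)$, which is precisely the condition in Lemma \ref{lem:f=0 f(px)=overline(p)x}. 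That lemma then forces $g=0$, and therefore $f=0$.

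This argument is essentially a bookkeeping reduction, so I do not expect a genuine obstacle; the only point demanding care is the correct bookkeeping of the anti-automorphism rule $\overline{ab}=\overline{b}\,\overline{a}$ and the matching of the two factors $\overline{z}$ and $\overline{p}$ to the slots $x$ and $q$ in the hypothesis, so that right multiplication by $q$ in the assumption becomes left multiplication by $\overline{p}$ after conjugating. Should a self-contained proof be preferred, an alternative would be to repeat the argument of Lemma \ref{lem:f=0 f(px)=overline(p)x} directly for the right-multiplication identity; but the reduction above is shorter and makes the relationship between the two lemmas transparent, which is why I would present it this way.
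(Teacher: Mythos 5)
Your proposal is correct and is essentially identical to the paper's own proof: both define $g(x):=f(\overline{x})$, compute $g(px)=f(\overline{x}\,\overline{p})=\overline{p}\,g(x)$ using the anti-automorphism property of conjugation, and then invoke Lemma \ref{lem:f=0 f(px)=overline(p)x}. No gaps; the bookkeeping of the slots $x=\overline{z}$, $q=\overline{p}$ is exactly right.
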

	\begin{proof}
		We define $g(x):=f(\overline{x})$. Then we obtain
		$$g(px)=f(\overline{x}\ \overline{p})=\overline{p}f(\overline{x})=\overline{p}g({x}).$$ It thus follows from Lemma \ref{lem:f=0 f(px)=overline(p)x} that $g=0$, i.e., $f=0$.
	\end{proof}
	
	

\subsection{$\O$-Hilbert spaces}
Octonionic Hilbert spaces are introduced   by Goldstine and Horwitz \cite{goldstine1964hilbert}. Recently, we \cite{huoqinghai2021Riesz} develop this theory by using a new notion of $\O$-\almost linearity.

\begin{mydef}[\cite{huoqinghai2021Riesz}] Let $M$ be a left $\O$-module.
	A real linear map $f: M \xlongrightarrow[]{} \O$
	is called an \textbf{$\O$-\almost linear} function if it satisfies
	$$\re [p,x,f]=0$$
	for any $p\in\O$ and $x\in M$.
	Here $$[p,x,f]:=f(px)-pf(x),$$ which is called the \textbf{second associator}.
	
\end{mydef}

The definition of $\O$-Hilbert spaces can be restated in terms of \almost linearity.
\begin{mydef}[\cite{huoqinghai2021Riesz}]\label{def:hilbert O space}
	A left $\spo$-module $H$ is called a \textbf{ pre-Hilbert  $\spo$-module} if there exists an $\spr $-bilinear map $\left\langle\cdot,\cdot \right\rangle :H\times H \rightarrow \O$, which is referred to as an \textbf{$\spo$-inner product}, satisfying:
	\begin{enumerate}
		\item \textbf{($\O$-\almost linearity)} $\left\langle\cdot,u\right\rangle$ is (left) $\spo$-\almost linear for all $u\in H$.
		\item \textbf{(Octonion hermiticity)} $\left\langle u ,v\right\rangle=\overline{\left\langle v ,u\right\rangle}$ for all $ u,v\in H$.
		\item \textbf{(Positivity)} $\left\langle u ,u\right\rangle\in \spr^+$ and $\left\langle u ,u\right\rangle=0$ if and only if $u=0$.
	\end{enumerate}
\end{mydef}
We can define the \textbf{norm} $\fsh{\cdot}:H\rightarrow \mathbb{R}^+ $ by setting
\begin{eqnarray}\label{def:fshu ||u||}
\fsh{u}=\sqrt{\fx{u}{u}}.
\end{eqnarray}
	A pre-Hilbert left $\spo$-module	$H$ is said to be a \textbf{Hilbert left $\spo$-module} if it is complete with respect to its natural distance  induced by the norm.

%
%

\begin{eg}\label{eg:o,o-}
	We give two typical  examples of Hilbert left $\spo$-modules.
	\begin{enumerate}
		\item $\O$ is a Hilbert left $\spo$-module with the $\O$-inner product $\fx{\cdot}{\cdot}_\O$ defined by
		$$\fx{x}{y}_{\O}=x\overline{y}$$for all $x,y\in \O$.
		\item The conjugate regular module $\overline{\O}$ is a Hilbert left $\spo$-module with the $\O$-inner product $\fx{\cdot}{\cdot}_{\overline{\O}}$ defined by
			$$\fx{x}{y}_{\overline{\O}}=y\overline{x}$$for all $x,y\in \O$. We remark that its left module structure    is  defined by
$$p\hat{\cdot}x:=\overline{p}x,$$
for any $p\in \spo$ and   $x\in \overline{\O}$.
		
	\end{enumerate}
\end{eg}

In an $\O$-Hilbert space $H$, we can regard $v\in H$ as an $\mathbb O$-para-linear map induced by the $\mathbb O$-inner product.
This motivates us to introduce the \textbf{second associator} of $H$ as
$$[p,u,v]:=[p,u,  \langle \cdot, v \rangle]
=
\left\langle pu ,v\right\rangle-p\left\langle u ,v\right\rangle$$ for $u,v\in H$ and $p\in \O$,


{
From now on, we  shall denote $$\re\fx{u}{v}:=\fx{u}{v}_\R$$ for all $u,v\in H$.
We assemble some identities concerning  second associators for later use. }

\begin{lemma}[\cite{huoqinghai2021Riesz}]
	For all $u,v\in H$ and all $p,q\in \O$, the following hold
\begin{eqnarray}
	\left\langle [p,q,u],v\right\rangle _\R&=&-\left\langle u,[p,q,v]\right\rangle _\R; \label{eq:<[qpx]y>0=<[qpy]x>}\\
{[p,v,u]}&=&-[p,u,v];\label{eq:Ap(u,v)=-Ap(v,u)}\\
{[pq,v,u]}&=&\fx{[p,q,v]}{u}-[p,q,\fx{v}{u}]+p[q,v,u]+[p,qv,u].\label{eq:Apq(v,u)=<[p,q,v],u>-[p,q,<v,u>]+pAq(v,u)+Ap(qv,u) }
\end{eqnarray}

\end{lemma}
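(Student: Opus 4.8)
My plan rests on two preliminary observations that package all the octonionic arithmetic needed. First, with respect to the real inner product $\langle\cdot,\cdot\rangle_{\R}=\re\langle\cdot,\cdot\rangle$, the left multiplication $L_p:u\mapsto pu$ is adjoint to $L_{\overline p}$. Indeed, para-linearity gives $\langle pu,v\rangle_{\R}=\re\big(p\langle u,v\rangle\big)$ and $\langle u,\overline p v\rangle_{\R}=\re\langle\overline p v,u\rangle=\re\big(\overline p\langle v,u\rangle\big)$, and since $\langle v,u\rangle=\overline{\langle u,v\rangle}$ the octonionic facts $\re\overline a=\re a$ and $\re(ab)=\re(ba)$ identify the two expressions; hence $L_p^{*}=L_{\overline p}$. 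Second, para-linearity says precisely that every second associator is purely imaginary, $\re[p,u,v]=0$, so $\overline{[p,u,v]}=-[p,u,v]$. These two facts, together with the alternating law $[p,q,x]=-[q,p,x]$ for the module associator and its vanishing whenever an argument is real, are the only inputs I expect to need.

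For the skew-adjointness identity $\langle[p,q,u],v\rangle_{\R}=-\langle u,[p,q,v]\rangle_{\R}$, I would write the module associator as $[p,q,u]=L_{pq}u-L_pL_qu$, pair against $v$ in the real inner product, and move the operators across using $L_p^{*}=L_{\overline p}$ and $(L_pL_q)^{*}=L_{\overline q}L_{\overline p}$. Since $\overline{pq}=\overline q\,\overline p$, the right-hand factor collapses to $L_{\overline q\,\overline p}-L_{\overline q}L_{\overline p}=[\overline q,\overline p,\cdot]$, giving $\langle[p,q,u],v\rangle_{\R}=\langle u,[\overline q,\overline p,v]\rangle_{\R}$. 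Finally I would expand $\overline q=2\re q-q$, $\overline p=2\re p-p$; because the module associator is $\R$-trilinear and kills real arguments, $[\overline q,\overline p,v]=[q,p,v]=-[p,q,v]$, which yields the claim.

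The cocycle identity $[pq,v,u]=\langle[p,q,v],u\rangle-[p,q,\langle v,u\rangle]+p[q,v,u]+[p,qv,u]$ I expect to be purely formal: substitute the definitions of all four flavours of associator (the module associator $[p,q,v]=(pq)v-p(qv)$, the octonionic associator $[p,q,\langle v,u\rangle]=(pq)\langle v,u\rangle-p(q\langle v,u\rangle)$, and the two second associators $[q,v,u]$ and $[p,qv,u]$) and collect terms. The four mixed contributions $\langle p(qv),u\rangle$, $p(q\langle v,u\rangle)$, and $p\langle qv,u\rangle$ each occur twice with opposite signs and cancel pairwise, leaving exactly $\langle(pq)v,u\rangle-(pq)\langle v,u\rangle=[pq,v,u]$. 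This step is only bookkeeping and carries no genuine content.

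The skew-symmetry $[p,v,u]=-[p,u,v]$ is where the real difficulty lies, since a direct manipulation only re-derives the statement tautologically. The plan is to obtain it by polarization from the diagonal vanishing $[p,u,u]=0$: because $[p,\cdot,\cdot]$ is $\R$-bilinear, replacing $u$ by $u+v$ in $[p,u,u]=0$ immediately gives $[p,u,v]+[p,v,u]=0$. Thus the crux is to prove $\langle pu,u\rangle=\langle u,u\rangle\,p$ (the octonionic analogue of the dependent axiom $(f)$). By $\R$-linearity in $p$ this reduces to a unit imaginary $p=J$, and I would show that $z:=\langle Ju,u\rangle$ equals $\langle u,u\rangle J$ by pinning down all its imaginary components. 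Its $J$-component comes from $J(Ju)=J^{2}u=-u$ together with para-linearity, which forces $\re(Jz)=-\langle u,u\rangle$; every orthogonal component $\re(Kz)$ with $K\perp J$ rewrites, via the adjoint relation, as $-\langle Ju,Ku\rangle_{\R}$, and the polarized module law $J(Ku)=-K(Ju)$ for anticommuting imaginary units combined again with $L^{*}_{K}=-L_{K}$ shows $\langle Ju,Ku\rangle_{\R}=-\langle Ju,Ku\rangle_{\R}=0$. Establishing this orthogonality cleanly is the main obstacle; once it is in place the diagonal vanishing, and hence the skew-symmetry, follows.
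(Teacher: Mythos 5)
Your argument is correct in all three parts, but note that the paper you are working against does not actually prove this lemma: it is imported verbatim from the reference \cite{huoqinghai2021Riesz}, so there is no in-paper proof to compare with. Judged on its own terms, your route is sound and self-contained. The adjointness $L_p^{*}=L_{\overline p}$ for $\langle\cdot,\cdot\rangle_{\R}$, combined with $[\overline q,\overline p,v]=[q,p,v]=-[p,q,v]$ (the module associator is trilinear, kills real arguments, and is left-alternating), gives \eqref{eq:<[qpx]y>0=<[qpy]x>} cleanly; the cocycle identity \eqref{eq:Apq(v,u)=<[p,q,v],u>-[p,q,<v,u>]+pAq(v,u)+Ap(qv,u) } is indeed pure cancellation, which I have checked term by term. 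For \eqref{eq:Ap(u,v)=-Ap(v,u)} you correctly identify that the content is the diagonal vanishing $[p,u,u]=0$, i.e.\ the redundancy of Goldstine--Horwitz's axiom $(f)$ relative to Definition \ref{def:hilbert O space} (which the paper asserts in the introduction, again deferring to the same reference), after which $\R$-bilinearity of $[p,\cdot,\cdot]$ polarizes it to skew-symmetry. One remark: you describe the orthogonality $\langle Ju,Ku\rangle_{\R}=0$ as the remaining obstacle, but your own sketch already closes it --- $\langle Ju,Ku\rangle_{\R}=-\langle K(Ju),u\rangle_{\R}=\langle J(Ku),u\rangle_{\R}=-\langle Ku,Ju\rangle_{\R}$ forces the quantity to vanish by symmetry of the real inner product, using only $J(Ku)=-K(Ju)$ (polarization of $r(rm)=r^2m$ for anticommuting imaginary units) and $L_K^{*}=-L_K$. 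So there is no gap; the proof is complete as written.
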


The following characterization of associative elements is very useful in the sequel.
\begin{lemma}[\cite{huoqinghai2021Riesz}]\label{lem:lem57}
	An element $x\in \huaa{H}$  if and only if
	\begin{equation} \label{eq:assup-ass-392}
	[p,x,y]=0
	\end{equation}
	for all $p\in\spo$ and $y\in H.$
\end{lemma}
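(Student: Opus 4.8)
The plan is to reduce everything to real parts, exploiting the fact that an $\O$-inner product is completely determined by its real part through a reconstruction formula, so that the only genuine arithmetic input is the associativity of the trace form of $\O$. First I would record two auxiliary identities valid in any $\O$-Hilbert space, both consequences of hermiticity, para-linearity, and the elementary facts $\re(ab)=\re(ba)$ and $\re((ab)c)=\re(a(bc))$ for $a,b,c\in\O$. The first is the adjoint relation $\re\langle pu,v\rangle=\re\langle u,\overline p v\rangle$: writing $\re\langle pu,v\rangle=\re(p\langle u,v\rangle)$ by para-linearity and $\re\langle u,\overline p v\rangle=\re(\overline p\,\overline{\langle u,v\rangle})$ by hermiticity and para-linearity, the two agree since $\re(\overline p\,\overline w)=\re(wp)=\re(pw)$. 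The second is the reconstruction formula
\[
\langle u,v\rangle=\sum_{k=0}^{7}\re\langle u,e_k v\rangle\, e_k ,
\]
which recovers the octonion-valued product from its real pairings against $e_0 v,\dots,e_7 v$; it holds because the $e_k$-coordinate $\re(\langle u,v\rangle\overline{e_k})$ equals $\re(e_k\langle v,u\rangle)=\re\langle e_k v,u\rangle=\re\langle u,e_k v\rangle$. These two identities are the workhorses.

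Let me first dispose of the easier implication, that the second associator vanishing forces $x\in\huaa{H}$. Assume $\langle px,y\rangle=p\langle x,y\rangle$ for all $p,y$. To obtain $[p,q,x]=(pq)x-p(qx)=0$ it suffices, by nondegeneracy of $\re\langle\cdot,\cdot\rangle$, to show $\re\langle(pq)x,z\rangle=\re\langle p(qx),z\rangle$ for every $z$. The left side is $\re((pq)\langle x,z\rangle)$ by the hypothesis. For the right side, para-linearity gives $\re\langle p(qx),z\rangle=\re(p\langle qx,z\rangle)$, and the hypothesis applied once more gives $\langle qx,z\rangle=q\langle x,z\rangle$, so the right side is $\re(p(q\langle x,z\rangle))$. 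With $a:=\langle x,z\rangle$ the two sides are $\re((pq)a)$ and $\re(p(qa))$, equal by associativity of the trace form; nondegeneracy then yields $x\in\huaa{H}$.

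The forward implication is the main obstacle, since para-linearity alone gives only $\re[p,x,y]=0$, so one must upgrade a statement about real parts to the vanishing of an entire (purely imaginary) octonion. Here the associativity of $x$ enters exactly once: combining $[p,q,x]=0$ with \eqref{eq:<[qpx]y>0=<[qpy]x>} gives $\re\langle x,[a,b,v]\rangle=0$, i.e. the re-association identity $\re\langle x,(ab)v\rangle=\re\langle x,a(bv)\rangle$ for all $a,b\in\O$ and $v\in H$. I would then expand $\langle px,y\rangle$ by the reconstruction formula, move $p$ across with the adjoint relation, and re-associate using the identity just obtained:
\[
\langle px,y\rangle=\sum_{k}\re\langle px,e_k y\rangle\,e_k=\sum_{k}\re\langle x,\overline p(e_k y)\rangle\,e_k=\sum_{k}\re\langle x,(\overline p\,e_k)y\rangle\,e_k .
\]
Expanding $p\langle x,y\rangle$ by the same formula, each side becomes a real-linear combination of the numbers $\re\langle x,e_m y\rangle$, and matching the coefficient of $e_k$ reduces to the purely octonionic identity $\re((\overline p\,e_k)\overline{e_m})=\re((pe_m)\overline{e_k})$, which is immediate from cyclicity and trace-form associativity. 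This forces $\langle px,y\rangle=p\langle x,y\rangle$, that is $[p,x,y]=0$.

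In sum, the whole argument rests on the reconstruction formula, which converts the real-valued information supplied by para-linearity and by \eqref{eq:<[qpx]y>0=<[qpy]x>} into honest octonionic equalities, while the associativity hypothesis is invoked solely to license the single re-association step $\re\langle x,(ab)v\rangle=\re\langle x,a(bv)\rangle$. I expect the bookkeeping of the structure constants in the final reduction to be the most error-prone part, although it is conceptually routine once the reconstruction formula and the adjoint relation are in hand.
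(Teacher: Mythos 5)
Your argument is correct. Note that the paper does not actually prove Lemma \ref{lem:lem57}: it is imported from \cite{huoqinghai2021Riesz} as a known fact, so there is no in-text proof to compare against. Judged on its own, your proof is a valid, self-contained derivation from the axioms of Definition \ref{def:hilbert O space} together with the quoted identity \eqref{eq:<[qpx]y>0=<[qpy]x>}. The two workhorses check out: the adjoint relation $\re \fx{pu}{v}=\re \fx{u}{\overline{p}v}$ follows from para-linearity, hermiticity and $\re(ab)=\re(ba)$ exactly as you say, and the reconstruction formula $\fx{u}{v}=\sum_{k=0}^{7}\re \fx{u}{e_k v}\, e_k$ is a correct coordinate computation (the $e_k$-coordinate of $w$ is $\re(w\overline{e_k})=\re(e_k\overline{w})$, and hermiticity plus para-linearity turn this into $\re\fx{u}{e_kv}$). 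In the converse direction, nondegeneracy of $\re\fx{\cdot}{\cdot}$ together with trace-form associativity $\re((pq)a)=\re(p(qa))$ in $\spo$ gives $(pq)x=p(qx)$; in the forward direction, \eqref{eq:<[qpx]y>0=<[qpy]x>} applied to $x\in\huaa{H}$ yields the re-association $\re\fx{x}{(ab)v}=\re\fx{x}{a(bv)}$, and the final coefficient match reduces to $\re((\overline{p}e_k)\overline{e_m})=\re((pe_m)\overline{e_k})$, which indeed holds by cyclicity and associativity of the trace form. The one stylistic remark is that your easy direction can be shortened: once one knows $\fx{qx}{z}=q\fx{x}{z}$ for all $q,z$, the identity \eqref{eq:Apq(v,u)=<[p,q,v],u>-[p,q,<v,u>]+pAq(v,u)+Ap(qv,u) } (or a direct appeal to \eqref{eq:<[qpx]y>0=<[qpy]x>}) gives $\re\fx{[p,q,x]}{z}=0$ immediately; but your route through the trace form is equally valid.
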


In contrast to the complex and quaternionic cases,  the $\O$-inner products are $\O$-\almost linear, rather than   $\O$-linear. The following lemma reveals the close relations  among $\spo$-inner product, $\spo$-scalar product,  and the second associator.

\begin{lemma}[\cite{huoqinghai2021Riesz}]\label{lem:the second associator of H 's prop}
	Let $H$ be a pre-Hilbert left $\spo$-module. Then for all $u,v\in H$ and all $p,q\in \spo$, the following  hold:
	\begin{eqnarray}
	\fx{u}{pv}&=&\fx{u}{v}\overline{p}+[p,u,v];\label{eq:<u,pv>=<u,v>p^+Ap(uv)}\\
	\fx{pu}{qv}&=&(p\fx{u}{v})\overline{q}+[{pq},u,v]+\fx{[p,q,v]}{u}.\label{eq:<pu,pv>in lemma}
	\end{eqnarray}
	
\end{lemma}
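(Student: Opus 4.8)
The plan is to obtain both identities directly from the axioms of the $\O$-inner product together with the associator relations already recorded in the preceding lemma. Throughout I use the shorthand $[p,u,v]=\fx{pu}{v}-p\fx{u}{v}$ for the second associator, and I exploit the fact that para-linearity of $\fx{\cdot}{u}$ forces $\re[p,u,v]=0$; thus each second associator is a purely imaginary octonion, so that $\overline{[p,u,v]}=-[p,u,v]$.

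First I would prove \eqref{eq:<u,pv>=<u,v>p^+Ap(uv)}. Octonion hermiticity gives $\fx{u}{pv}=\overline{\fx{pv}{u}}$, and the definition of the second associator rewrites $\fx{pv}{u}=p\fx{v}{u}+[p,v,u]$. Conjugating this sum, the relation $\overline{ab}=\overline{b}\,\overline{a}$ together with hermiticity turns the first term into $\overline{p\fx{v}{u}}=\overline{\fx{v}{u}}\,\overline{p}=\fx{u}{v}\,\overline{p}$, while the purely imaginary character of the associator gives $\overline{[p,v,u]}=-[p,v,u]$, which becomes $[p,u,v]$ after the antisymmetry \eqref{eq:Ap(u,v)=-Ap(v,u)}. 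Adding the two contributions produces exactly \eqref{eq:<u,pv>=<u,v>p^+Ap(uv)}.

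For \eqref{eq:<pu,pv>in lemma} I would peel off the outer scalar using the identity just proved: applying \eqref{eq:<u,pv>=<u,v>p^+Ap(uv)} with $pu$ in the first slot and $q$ as the scalar yields $\fx{pu}{qv}=\fx{pu}{v}\,\overline{q}+[q,pu,v]$, and expanding $\fx{pu}{v}=p\fx{u}{v}+[p,u,v]$ isolates the desired term $(p\fx{u}{v})\overline{q}$ and reduces the claim to the purely associator identity
\begin{equation*}
[p,u,v]\,\overline{q}+[q,pu,v]=[pq,u,v]+\fx{[p,q,v]}{u}.
\end{equation*}
To finish I would rewrite $[pq,u,v]=-[pq,v,u]$ by \eqref{eq:Ap(u,v)=-Ap(v,u)}, substitute the expansion \eqref{eq:Apq(v,u)=<[p,q,v],u>-[p,q,<v,u>]+pAq(v,u)+Ap(qv,u) } of $[pq,v,u]$, and match the remaining terms. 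The hard part will be precisely this last bookkeeping: because the products are non-associative, tracking the terms $[p,u,v]\,\overline{q}$ and $[q,pu,v]$ against those coming from the three-variable identity requires careful use of the antisymmetry relations and of the definition of the second associator, and this is where I expect the main technical effort to lie.
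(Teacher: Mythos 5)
This lemma is quoted from \cite{huoqinghai2021Riesz}; the paper itself gives no proof of it, so your attempt can only be judged on its own merits. Your derivation of \eqref{eq:<u,pv>=<u,v>p^+Ap(uv)} is complete and correct: hermiticity, the definition of the second associator, the purely imaginary character of $[p,v,u]$ forced by para-linearity, and the antisymmetry \eqref{eq:Ap(u,v)=-Ap(v,u)} combine exactly as you describe.

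For \eqref{eq:<pu,pv>in lemma}, your reduction to the identity
\[
[p,u,v]\,\overline{q}+[q,pu,v]=[pq,u,v]+\fx{[p,q,v]}{u}
\]
is correct, but the proof stops precisely where the real work begins, and the two tools you name do not close it by mere term-matching. After replacing every second associator in \eqref{eq:Apq(v,u)=<[p,q,v],u>-[p,q,<v,u>]+pAq(v,u)+Ap(qv,u) } via antisymmetry, the quantity $[pq,u,v]+\fx{[p,q,v]}{u}$ becomes $[p,q,\fx{v}{u}]+p[q,u,v]+[p,u,qv]$, which contains the term $[p,u,qv]$ with no counterpart on your left-hand side. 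To finish, one must expand $[p,u,qv]=\fx{pu}{qv}-p\fx{u}{qv}$ and rewrite both inner products using the already-proved first identity together with your step-one expansion of $\fx{pu}{qv}$; everything then cancels except the purely octonionic identity $[p,q,\fx{v}{u}]+[p,\fx{u}{v},\overline{q}]=0$, which holds because the associator of $\O$ is alternating and vanishes on real arguments, so that $[p,\overline{a},\overline{q}]=[p,a,q]=-[p,q,a]$ with $a=\fx{v}{u}$. With that supplement the argument does go through, but as written the decisive computation is asserted rather than carried out.
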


\section{Tensor product decompositions}
In this section, we provide  a Hilbert left $\O$-module with a  tensor product decomposition structure.
We first define  the tensor product of a Hilbert left $\O$-module with a real Hilbert space.
\begin{mydef}\label{def:tensor}
	Let $(H_1,\fx{\cdot}{\cdot}_1)$ be a Hilbert left $\O$-module and $(H_2,\fx{\cdot}{\cdot}_2)$ be a real Hilbert space. Then we endow the tensor product  $H_1\otimes_\R H_2$  with
	a canonical left $\O$-module structure
	\begin{eqnarray*}
		\O\times \big(H_1\otimes_\R H_2\big)&\to& H_1\otimes_\R H_2\\
		(p,u\otimes_\R x)&\mapsto&(pu)\otimes_\R x
	\end{eqnarray*} and an  $\mathbb O$-inner product    \begin{eqnarray}
	\fx{u\otimes_\R x}{v\otimes_\R y}:=\fx{u}{v}_1\fx{x}{y}_2
	\end{eqnarray}
	for all $u,v\in H_1$ and all $x,y\in H_2$. Then one can check that the tensor product $(H_1\otimes_\R H_2,\fx{\cdot}{\cdot})$ is a Hilbert left $\O$-module.
\end{mydef}
\medskip

\begin{mydef}
	Let $(H_1,\fx{\cdot}{\cdot}_1)$ and $(H_2,\fx{\cdot}{\cdot}_2)$ be two Hilbert left $\O$-modules. We endow the direct sum $H_1\oplus H_2$  with the canonical left $\O$-module structure and the $\O$-inner product	$$\fx{(x,u)}{(y,v)}:=\fx{x}{y}_1+\fx{u}{v}_2$$ for all $x,y\in H_1$ and $u,v\in H_2$. One can check that the direct sum $(H_1\oplus  H_2,\fx{\cdot}{\cdot})$ is a Hilbert left $\O$-module. By definition we know that  $H_1$ is orthogonal to $H_2$ if we regard $H_1$ and $H_2$ as canonical  subsets of $H_1\oplus H_2$.
\end{mydef}

\begin{mydef}
	Let $(H_1,\fx{\cdot}{\cdot}_1)$ and $(H_2,\fx{\cdot}{\cdot}_2)$ be two Hilbert left $\O$-modules. A map $$f:H_1\to H_2$$ is called an isomorphism  of Hilbert left $\O$-modules if
	\begin{enumerate}
		\item  for all $p\in \O$ and $x\in H_1$, $$f(px)=pf(x);$$
		\item for all  $x,y\in H_1$, $$\fx{f(x)}{f(y)}_2=\fx{x}{y}_1.$$
	\end{enumerate}
\end{mydef}
 It is  useful  to know   the behavior of the $\mathbb O$-inner product  on associative and conjugate associative elements   in an $\spo$-Hilbert  space.

\begin{lemma}\label{cor:ass and conj ass in H}
	Let $H$ be a Hilbert left $\O$-module. Then  we have
	\begin{enumerate}
		\item for  all $u,v\in \huaa{ H}$,
		\begin{align}\label{eq:<uv>R}
		\fx{u}{v}&\in\spr;
		\end{align}
		\item for all $u\in \huaa{H}$ and $v\in \hua{A}{-}{H}$, 	\begin{align}\label{eq:<uv>=0}
		\fx{u}{v}=0.
		\end{align}
	\end{enumerate}
\end{lemma}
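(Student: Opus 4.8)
The plan is to treat the two parts separately and, in each case, to reduce the claim to an identity holding purely among octonions, which I then dispatch either by the structure of the nucleus of $\O$ or by the Schur-type Lemma \ref{lem:f=0 f(xq)=qf(x)}. Throughout, write $w:=\fx{u}{v}$.

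For part (i), assume $u,v\in\huaa{H}$. First I would compute $\fx{pu}{qv}$ for arbitrary $p,q\in\O$ via identity \eqref{eq:<pu,pv>in lemma}. Since $u$ is associative, Lemma \ref{lem:lem57} makes the second associator $[pq,u,v]$ vanish; since $v$ is associative, the left associator $[p,q,v]=(pq)v-p(qv)$ vanishes, so the term $\fx{[p,q,v]}{u}$ drops out. This leaves $\fx{pu}{qv}=(pw)\overline{q}$. On the other hand, octonion hermiticity gives $\fx{pu}{qv}=\overline{\fx{qv}{pu}}$, and applying the same computation to $\fx{qv}{pu}$ (noting $\fx{v}{u}=\overline{w}$) yields $\fx{qv}{pu}=(q\overline{w})\overline{p}$, whence $\fx{pu}{qv}=p(w\overline{q})$. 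Comparing the two expressions produces $(pw)\overline{q}=p(w\overline{q})$ for all $p,q$, that is, $(pw)r=p(wr)$ for all $p,r\in\O$. Thus $w$ lies in the nucleus of the octonion algebra, which equals $\spr$ since $\O$ is alternative; hence $w\in\spr$, establishing \eqref{eq:<uv>R}.

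For part (ii), assume $u\in\huaa{H}$ and $v\in\hua{A}{-}{H}$. The starting point is identity \eqref{eq:<u,pv>=<u,v>p^+Ap(uv)}: since $u$ is associative, Lemma \ref{lem:lem57} gives $[p,u,y]=0$ for every $y$, so $\fx{u}{py}=\fx{u}{y}\,\overline{p}$ for all $p\in\O$ and $y\in H$. I would then evaluate $\fx{u}{(qp)v}$ in two ways. Applying this relation with scalar $qp$ gives $\fx{u}{(qp)v}=w\,\overline{qp}=w(\overline{p}\,\overline{q})$. On the other hand, conjugate associativity of $v$ means $(qp)v=p(qv)$, and applying the relation twice (first with scalar $p$ and element $qv$, then with scalar $q$) gives $\fx{u}{p(qv)}=(w\overline{q})\overline{p}$. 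Equating and setting $a=\overline{q}$, $b=\overline{p}$ yields the octonionic identity $w(ba)=(wa)b$ for all $a,b\in\O$. Taking conjugates of this identity and using $\overline{xy}=\overline{y}\,\overline{x}$ converts it into $(st)c=t(sc)$ for all $s,t\in\O$, where $c:=\overline{w}$. Setting $f:=R_c$, the right multiplication $f(s)=sc$, this reads $f(st)=t\,f(s)$, i.e.\ $f(xq)=q\,f(x)$ in the notation of Lemma \ref{lem:f=0 f(xq)=qf(x)}; that lemma gives $f=0$, hence $c=f(1)=0$ and $w=0$, proving \eqref{eq:<uv>=0}.

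I expect the main obstacle to be the bookkeeping of the non-associative products: correctly reducing the two evaluations of $\fx{pu}{qv}$ (respectively $\fx{u}{(qp)v}$) to the clean octonionic identities $(pw)r=p(wr)$ and $w(ba)=(wa)b$, and then spotting the conjugation and substitution that recast the second identity exactly into the hypothesis of Lemma \ref{lem:f=0 f(xq)=qf(x)}. Once these identities are in hand, the two conclusions follow immediately from the facts that the nucleus of $\O$ is $\spr$ and from the Schur-type lemma, respectively.
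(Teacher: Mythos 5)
Your proof is correct and follows essentially the same strategy as the paper's: both arguments reduce each claim to an identity holding in $\O$ itself and then conclude via the fact that the nucleus of $\O$ is $\spr$ for part (i) and via the Schur-type Lemma \ref{lem:f=0 f(xq)=qf(x)} applied to right multiplication by $\overline{\fx{u}{v}}$ for part (ii). The only difference is which preliminary identity supplies the octonionic relation: the paper derives both from \eqref{eq:Apq(v,u)=<[p,q,v],u>-[p,q,<v,u>]+pAq(v,u)+Ap(qv,u) }, whereas you use \eqref{eq:<pu,pv>in lemma} combined with hermiticity for (i) and \eqref{eq:<u,pv>=<u,v>p^+Ap(uv)} combined with $(qp)v=p(qv)$ for (ii), arriving at equivalent identities.
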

\begin{proof}
	We first prove a\asertion{1}.
	Recall identity \eqref{eq:Apq(v,u)=<[p,q,v],u>-[p,q,<v,u>]+pAq(v,u)+Ap(qv,u) }:
	$${[pq,v,u]}=\fx{[p,q,v]}{u}-[p,q,\fx{v}{u}]+p[q,v,u]+[p,qv,u]$$
	for all $p,\, q\in \O$  and $u,v\in H$. If $u,\,v\in \huaa{H}$, then it follows from Lemma \ref{lem:lem57}
	that
	$$[p,{q},\fx{v}{u}]=0$$
	for all $p,\, q\in \O$.
	This implies  $\fx{v}{u}\in \mathbb R$  as desired.
	
	Now we prove a\asertion{2}. Let $u\in \huaa{H}$ and $v\in \hua{A}{-}{H}$. By definition, we have
	\begin{eqnarray*}[p,q,v]&=&(pq)v-p(qv)
		\\&=&(pq)v-(qp)v
		\\
		&=&[p,q]v.\end{eqnarray*}
	According to
	identity \eqref{eq:Apq(v,u)=<[p,q,v],u>-[p,q,<v,u>]+pAq(v,u)+Ap(qv,u) } and Lemma \ref{lem:lem57}, we obtain
	\begin{eqnarray*}
		0&=&\fx{[p,q,v]}{u}-[p,q,\fx{v}{u}]\\
		&=&\fx{[p,q]v}{u}-[p,q,\fx{v}{u}]\\
		&=&[p,q]\fx{v}{u}+[[p,q],v,u]-[p,q,\fx{v}{u}]\\
		&=&(pq-qp)\fx{v}{u}-(pq)\fx{v}{u}+p(q\fx{v}{u})\\
		&=&p(q\fx{v}{u})-(qp)\fx{v}{u}.
	\end{eqnarray*}
	This means the  map \begin{eqnarray*}
		f:\O&\to &\O\\
		x&\mapsto& x\fx{v}{u}
	\end{eqnarray*}
	satisfies
	$$pf(q)=f(qp)$$for all $p,q\in \O$. By Lemma \ref{lem:f=0 f(xq)=qf(x)}, we obtain that $f=0$ and hence $\fx{v}{u}=0$.
\end{proof}


\begin{lemma}\label{lem:tensor product}
	Let $H$ be a Hilbert left $\O$-module. If $\hua{A}{-}{H}=0$, then  there exists an isomorphism of Hilbert left $\O$-modules
	\begin{eqnarray}
	H\cong\O\otimes_\R\huaa{H}.
	\end{eqnarray}
	Here $\O$ is a Hilbert left $\O$-module as defined in Example \ref{eg:o,o-}, and $\huaa{H}$ is a real  Hilbert space with the inner product induced from  $H$ by restriction.
  More precisely, for any $p,q\in \O$ and $x,y\in \huaa{H}$, we have
		\begin{eqnarray}
		\fx{px}{qy}=\fx{p}{q}_\O\fx{x}{y}.
		\end{eqnarray}
\end{lemma}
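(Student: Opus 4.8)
The plan is to exhibit the explicit map $\Phi\colon \O\otimes_\R\huaa{H}\to H$ determined by $\Phi(p\otimes_\R x)=px$ and to check that it is an isomorphism of Hilbert left $\O$-modules. First I would observe that $(p,x)\mapsto px$ is $\R$-bilinear on $\O\times\huaa{H}$ (left multiplication is $\R$-linear in the scalar by definition of a left $\O$-module and $\R$-linear in the vector since each $L_p$ is an $\R$-endomorphism), so it factors through the tensor product and $\Phi$ is a well-defined $\R$-linear map. Its $\O$-linearity is then immediate from the defining property of the nucleus: for $x\in\huaa{H}$ the associator $[q,p,x]=0$ vanishes, whence $\Phi\big(q\cdot(p\otimes_\R x)\big)=(qp)x=q(px)=q\,\Phi(p\otimes_\R x)$.

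The heart of the argument is the inner-product identity $\fx{px}{qy}=\fx{p}{q}_\O\fx{x}{y}$ for $p,q\in\O$ and $x,y\in\huaa{H}$, which is exactly the statement that $\Phi$ is inner-product preserving on simple tensors. To prove it I would specialize identity \eqref{eq:<pu,pv>in lemma} to $u=x$, $v=y$, giving
$$\fx{px}{qy}=(p\fx{x}{y})\overline{q}+[pq,x,y]+\fx{[p,q,y]}{x}.$$
Both correction terms then vanish: since $x\in\huaa{H}$, Lemma \ref{lem:lem57} gives the second associator $[pq,x,y]=0$, while since $y\in\huaa{H}$ the module associator $[p,q,y]=(pq)y-p(qy)$ is zero, so $\fx{[p,q,y]}{x}=0$. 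Finally Lemma \ref{cor:ass and conj ass in H}(1) shows $\fx{x}{y}\in\spr$, so this real scalar commutes and associates freely and $(p\fx{x}{y})\overline{q}=(p\overline{q})\fx{x}{y}=\fx{p}{q}_\O\fx{x}{y}$. Extending by $\R$-bilinearity, $\Phi$ preserves the tensor-product inner product of Definition \ref{def:tensor} on all of $\O\otimes_\R\huaa{H}$; in particular $\Phi$ is isometric, hence injective by the positivity axiom of Definition \ref{def:hilbert O space}.

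It remains to verify surjectivity, and here I would invoke Theorem \ref{thm:M=OA+OA^-}: under the hypothesis $\hua{A}{-}{H}=0$ the module decomposition collapses to $H=\O\huaa{H}$, so every $h\in H$ is a finite sum $\sum_i p_i u_i$ with $p_i\in\O$ and $u_i\in\huaa{H}$, which is precisely $\Phi\big(\sum_i p_i\otimes_\R u_i\big)$. Thus $\Phi$ is onto, and being a bijective, $\O$-linear, inner-product-preserving map it is the desired isomorphism of Hilbert left $\O$-modules. As a byproduct, since $\dim_\R\O=8$ the algebraic tensor product $\O\otimes_\R\huaa{H}$ is already complete (it is isometric to $H$), so no separate completion step is required.

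I expect the only genuinely substantive step to be the inner-product identity $\fx{px}{qy}=\fx{p}{q}_\O\fx{x}{y}$; once identity \eqref{eq:<pu,pv>in lemma} is applied the difficulty lies entirely in recognizing that the two associator terms drop out for associative $x,y$ and that $\fx{x}{y}$ is real, after which the computation is purely formal. The surjectivity and the remaining bijectivity bookkeeping are routine consequences of Theorem \ref{thm:M=OA+OA^-} and the isometry property.
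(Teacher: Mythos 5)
Your proposal is correct and follows essentially the same route as the paper: the same canonical map $p\otimes_\R x\mapsto px$, surjectivity from Theorem \ref{thm:M=OA+OA^-} under $\hua{A}{-}{H}=0$, and the inner-product identity obtained by specializing \eqref{eq:<pu,pv>in lemma} and killing the two associator terms via Lemma \ref{lem:lem57} and the nucleus condition, with Lemma \ref{cor:ass and conj ass in H} supplying $\fx{x}{y}\in\spr$. You merely spell out a few steps the paper leaves as ``easily checked'' (the $\O$-linearity and injectivity-via-isometry), which is fine.
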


\begin{proof}
	The $\O$-Hilbert space structure of    $\O\otimes_\R\huaa{ H}$ is given as in Definition \ref{def:tensor}. That is,	
	\begin{eqnarray}
	\Big\langle{\sum_{i=0}^7e_i\otimes_\R x_i}, \ \ { \sum_{i=0}^7e_i\otimes_\R
		y_i}\Big\rangle_{\O\otimes_\R\huaa{ H}}=\sum_{i,j=0}^7(e_i\overline{e_j})\fx{x_i}{y_j}
	\end{eqnarray}
	for any
	$x_i,y_i\in \huaa{ H}$.
	
	Since $\hua{A}{-}{H}=0$, it follows from Theorem \ref{thm:M=OA+OA^-} that $$H=\O\huaa{ H}.$$
	We consider the  canonical map
	\begin{eqnarray*}
		\varphi:\quad	\O\otimes_\R\huaa{ H}&\to& H\\
		\sum_{i=0}^7e_i\otimes_\R x_i&\mapsto&\sum_{i=0}^7e_i x_i.
	\end{eqnarray*}
	One can easily check that this is an $\O$-isomorphism of left $\O$-modules.
	It  follows from identity \eqref{eq:<pu,pv>in lemma} and Lemma \ref{cor:ass and conj ass in H} that
	\begin{eqnarray*}
		\Big\langle{\sum_{i=0}^7e_i x_i}, \ \ {\sum_{i=0}^7e_i y_i}\Big\rangle&=&\sum_{i,j=0}^7\fx{e_ix_i}{e_jy_j}\\
		&=&\sum_{i,j=0}^7 \Big( e_i(\fx{x_i}{y_j}\overline{e_j})+[{e_ie_j},x_i,y_j]+\fx{[e_i,e_j,y_j]}{x_i}
		\Big) \\
		&=&\sum_{i,j=0}^7(e_i\overline{e_j})\fx{x_i}{y_j}.
	\end{eqnarray*}
	This means that $$	\Big\langle{\sum_{i=0}^7e_i x_i}, \ \ {\sum_{i=0}^7e_i y_i}\Big\rangle=\Big\langle{\sum_{i=0}^7e_i\otimes_\R x_i}, \ \ {\sum_{i=0}^7e_i\otimes_\R y_i}\Big\rangle_{\O\otimes_\R\huaa{ H}}.$$
	Hence $\varphi$
	is an isomorphism of $\O$-Hilbert spaces. 
\end{proof}


Every Hilbert left $\O$-module can be associated with another Hilbert left $\O$-module structure.
\begin{mydef}
	Let $(H,\fx{\cdot}{\cdot})$ be a Hilbert left $\O$-module.
	We endow $H$ with another left $\O$-multiplication
	\begin{eqnarray*}
		\O\times H&\to& H\\
		(p,x)&\mapsto& p\hat{\cdot}x:=\overline{p}x.
	\end{eqnarray*}
This is also a left $\O$-module and we denote it by $H^-$.
	We define 
	$$\fx{x}{y}_{H^-}:=\fx{y}{x}$$ for any  $x,y\in H$.
\end{mydef}

We now check that $(H^-,\fx{\cdot}{\cdot}_{H^-})$ is indeed a Hilbert left $\O$-module.

\begin{lemma}\label{eq:lemma-duality-261}
	Let $(H,\fx{\cdot}{\cdot})$ be a Hilbert left $\O$-module. Then $(H^-,\fx{\cdot}{\cdot}_{H^-})$ is  a Hilbert left $\O$-module.
	Moreover, it holds
	\begin{eqnarray}
	\huaa{H}&=&\hua{A}{-}{H^-},\label{eq:hua H hua -H}
	\\
	\huaa{H^-}&=&\hua{A}{-}{H}.\label{eq:hua H- hua- H}
	\end{eqnarray}
	The two Hilbert left $\O$-modules have a close relation between their second associators
	\begin{eqnarray}
	[p,x,y]_{H^-}=[p,x,y]+[\fx{x}{y},p]
	\end{eqnarray}
	for all $p\in \O$ and $x,y\in H$.
\end{lemma}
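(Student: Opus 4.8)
The plan is to verify the three Hilbert-module axioms for $(H^-,\fx{\cdot}{\cdot}_{H^-})$, then read off the two nuclear identities, and finally establish the relation between the second associators, from which $\O$-para-linearity itself will follow.

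First I would dispatch the structural and analytically easy parts. That $H^-$ is a left $\O$-module is the same computation as for $\overline{\O}$ in Example \ref{eg:o,o-}: one has $1\hat{\cdot}m=m$, and since $\overline{r^2}=\overline{r}^{\,2}$ the alternativity criterion holds, namely $r\hat{\cdot}(r\hat{\cdot}m)=\overline{r}(\overline{r}m)=\overline{r}^{\,2}m=r^2\hat{\cdot}m$, where the middle equality is the module property of $H$. Completeness is free, because the real part of the inner product is unchanged: $\re\fx{x}{y}_{H^-}=\re\fx{y}{x}=\re\overline{\fx{x}{y}}=\re\fx{x}{y}$, so $H^-$ carries the very same underlying real Hilbert space as $H$. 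Hermiticity is immediate, $\overline{\fx{y}{x}_{H^-}}=\overline{\fx{x}{y}}=\fx{y}{x}=\fx{x}{y}_{H^-}$, and positivity likewise since $\fx{x}{x}_{H^-}=\fx{x}{x}$. Only $\R$-bilinearity (inherited) and $\O$-para-linearity of $\fx{\cdot}{u}_{H^-}$ remain, and I would postpone the latter.

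Next, the nuclear identities \eqref{eq:hua H hua -H} and \eqref{eq:hua H- hua- H} are purely module-theoretic and follow by unwinding the definitions through $p\hat{\cdot}x=\overline{p}x$ and the anti-automorphism law $\overline{pq}=\overline{q}\,\overline{p}$. For $m\in H^-$ the conjugate-associative condition $(pq)\hat{\cdot}m=q\hat{\cdot}(p\hat{\cdot}m)$ reads $(\overline{q}\,\overline{p})m=\overline{q}(\overline{p}m)$; letting $a=\overline{q}$ and $b=\overline{p}$ range over $\O$, this is exactly $(ab)m=a(bm)$, i.e. $m\in\huaa{H}$, which gives \eqref{eq:hua H hua -H}. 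Symmetrically, the associativity condition in $H^-$ becomes $(\overline{q}\,\overline{p})m=\overline{p}(\overline{q}m)$, i.e. $(ab)m=b(am)$, which is membership in $\hua{A}{-}{H}$, giving \eqref{eq:hua H- hua- H}.

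The crux is the second-associator relation, and deducing para-linearity from it. By definition $[p,x,y]_{H^-}=\fx{p\hat{\cdot}x}{y}_{H^-}-p\fx{x}{y}_{H^-}=\fx{y}{\overline{p}x}-p\fx{y}{x}$. Applying \eqref{eq:<u,pv>=<u,v>p^+Ap(uv)} to $\fx{y}{\overline{p}x}$ produces $\fx{y}{x}p+[\overline{p},y,x]$. The one nontrivial input is the auxiliary identity $[\overline{p},y,x]=[p,x,y]$: since $[p,u,v]$ is $\R$-linear in $p$ and vanishes for real $p$ (by $\R$-bilinearity of $\fx{\cdot}{\cdot}$), it depends only on $\im p$, whence $[\overline{p},\cdot,\cdot]=-[p,\cdot,\cdot]$; combining this with the antisymmetry \eqref{eq:Ap(u,v)=-Ap(v,u)} yields $[\overline{p},y,x]=-[p,y,x]=[p,x,y]$. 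Substituting gives the stated relation between $[p,x,y]_{H^-}$ and $[p,x,y]$ up to an octonionic commutator of the form $[\fx{x}{y},p]$. Finally $\O$-para-linearity of $\fx{\cdot}{u}_{H^-}$ drops out for free: every octonionic commutator is purely imaginary, so $\re[p,x,y]_{H^-}=\re[p,x,y]+\re[\fx{x}{y},p]=0$ by the para-linearity of the original product. I expect the sign and conjugation bookkeeping in this last computation — tracking $\overline{p}$ versus $p$ and $\fx{x}{y}$ versus $\fx{y}{x}$ through \eqref{eq:<u,pv>=<u,v>p^+Ap(uv)} — to be the only real obstacle; every other step is a direct substitution.
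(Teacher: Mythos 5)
Your proposal is correct and follows essentially the same route as the paper: verify the module axioms and para-linearity directly, unwind the definitions via $p\hat{\cdot}x=\overline{p}x$ and $\overline{pq}=\overline{q}\,\overline{p}$ for the nuclear identities, and compute $[p,x,y]_{H^-}$ from $\fx{y}{\overline{p}x}$ using \eqref{eq:<u,pv>=<u,v>p^+Ap(uv)} together with $[\overline{p},y,x]=-[p,y,x]=[p,x,y]$. The ``bookkeeping obstacle'' you flag at the end is, however, a real one and worth resolving: your substitution yields $[p,x,y]_{H^-}=[p,x,y]+[\fx{y}{x},p]$, and since $\fx{y}{x}=\overline{\fx{x}{y}}$ and $[\overline{a},p]=-[a,p]$, this equals $[p,x,y]-[\fx{x}{y},p]$, which differs in sign from the displayed formula; a direct check in $H=\O$ with $p=e_1$, $x=e_2$, $y=e_4$ gives $[p,x,y]_{H^-}=0$ while $[p,x,y]+[\fx{x}{y},p]=-4e_7$, confirming that your version is the correct one and that the stated identity (and the final line of the paper's own computation) carries a sign slip. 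The discrepancy is harmless for everything that follows, since only $\re[p,x,y]_{H^-}$ is ever used and every octonionic commutator is purely imaginary, so your deduction of para-linearity stands either way.
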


\begin{proof}
	By direct calculations, we have
	\begin{eqnarray*}
		\re \left(\fx{p\hat{\cdot}x}{y}_{H^-}-p\fx{x}{y}_{H^-}\right)&=&\re \left(\fx{y}{\overline{p}x}-p\fx{y}{x}\right)\\
		&=&\re \left(\fx{y}{x}p-[\overline p,y,x]-p\fx{y}{x}\right)\\
		&=&\re \left([\fx{y}{x},p]-[\overline p,y,x]\right)\\
		&=&0.
	\end{eqnarray*}
	This shows the para-linearity. The proof of hermiticity and positivity is trivial.
	
	We come to prove \eqref{eq:hua H hua -H}.
	By definition,
	\begin{eqnarray*}
		\hua{A}{-}{H^-}&=&\{x\mid (pq)\hat{\cdot} x=q\hat{\cdot}(p\hat{\cdot}x), \forall p,q\in \O \}\\
		&=&\{x\mid (\overline{q}\ \overline{p}) x=\overline{q}(\overline{p}x), \forall p,q\in \O \}\\
		&=&\{x\mid [p,q,x]=0, \forall p,q\in \O \}\\
		&=&\huaa{H}.
	\end{eqnarray*}
	This shows  $$\hua{A}{-}{H^-}=\huaa{H}.$$
	It is easy to see that $$(H^-)^-=H.$$ Hence we get $$\huaa{H^-}=\hua{A}{-}{(H^-)^-}=\hua{A}{-}{H}.$$
	
	Finally, we compute the second associators of $H^-$.
	For any $p\in \O$ and $x,y \in H$, we have
	\begin{eqnarray*}
		[p,x,y]_{H^-}&=&\fx{p\hat{\cdot}x}{y}_{H^-}-p\fx{x}{y}_{H^-}\\
		&=&\fx{y}{\overline{p}x}-p\fx{y}{x}\\
		&=&\fx{y}{x}p-[p,y,x]-p\fx{y}{x}\\
		&=&[p,x,y]+[\fx{x}{y},p].
	\end{eqnarray*}
	This proves the lemma.
\end{proof}

By the duality in Lemma \ref{eq:lemma-duality-261}, we   can extend the result  in Lemma \ref{cor:ass and conj ass in H}  to $\hua{A}{-}{H}$.
\begin{lemma}
	Let $H$ be a Hilbert left $\O$-module. Then \begin{eqnarray}
	\fx{x}{y}\in \R
	\end{eqnarray}
	for all $x,y\in \hua{A}{-}{H}$.
\end{lemma}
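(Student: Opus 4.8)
The plan is to exploit the duality between $H$ and its companion module $H^-$ furnished by Lemma \ref{eq:lemma-duality-261}, so that the present claim becomes a direct corollary of the already-established a\asertion{1} of Lemma \ref{cor:ass and conj ass in H}. The point is that conjugate associative elements of $H$ are honest associative elements of $H^-$, and for associative elements the reality of the inner product has already been proved.

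First I would invoke \eqref{eq:hua H- hua- H} to rewrite the hypothesis: since $\huaa{H^-}=\hua{A}{-}{H}$, any $x,y\in\hua{A}{-}{H}$ are precisely elements of the nucleus $\huaa{H^-}$. Crucially, Lemma \ref{eq:lemma-duality-261} already guarantees that $(H^-,\fx{\cdot}{\cdot}_{H^-})$ is itself a Hilbert left $\O$-module, so every result proved for an arbitrary Hilbert left $\O$-module applies verbatim to $H^-$. In particular, applying a\asertion{1} of Lemma \ref{cor:ass and conj ass in H} to the module $H^-$ and to the associative elements $x,y\in\huaa{H^-}$ yields
$$\fx{x}{y}_{H^-}\in\R.$$

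It then remains only to translate this back to the original inner product. By the definition of $H^-$ we have $\fx{x}{y}_{H^-}=\fx{y}{x}$, whence $\fx{y}{x}\in\R$; octonion hermiticity gives $\fx{x}{y}=\overline{\fx{y}{x}}=\fx{y}{x}$, so $\fx{x}{y}\in\R$, as desired.

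I do not anticipate a genuine obstacle here: all the real work was done in setting up the duality in Lemma \ref{eq:lemma-duality-261}. The only point requiring care is confirming that the hypotheses of Lemma \ref{cor:ass and conj ass in H} are legitimately met for $H^-$ rather than $H$ — that is, that $H^-$ is a bona fide Hilbert left $\O$-module and that the identification $\huaa{H^-}=\hua{A}{-}{H}$ is exactly the one recorded in \eqref{eq:hua H- hua- H} — both of which are supplied by the preceding lemma.
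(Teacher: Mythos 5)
Your proposal is correct and follows exactly the paper's route: the paper also derives the claim by passing to the dual module $H^-$, using $\huaa{H^-}=\hua{A}{-}{H}$ and applying the already-proved reality of the inner product on associative elements. Your write-up just makes explicit the translation step $\fx{x}{y}_{H^-}=\fx{y}{x}$ and the use of hermiticity, which the paper leaves implicit.
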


\begin{proof} It is a direct consequence of   Lemma \ref{eq:lemma-duality-261} with $H$ replaced by $H^-$.
\end{proof}

\begin{lemma}\label{lem:tensor product o-}
	Let $H$ be a Hilbert left $\O$-module. If $\huaa{H}=0$, then  there exists an  $\O$-isomorphism of $\O$-Hilbert spaces
	\begin{eqnarray}
	H\cong\overline{\O}\otimes_\R\hua{A}{-}{H}.
	\end{eqnarray}
	Here $\overline{\O}$ is a Hilbert left $\O$-module as defined in Example \ref{eg:o,o-}, and $\hua{A}{-}{H}$ is a real  Hilbert space with the inner product induced from that of $H$.
	 More precisely, for any $p,q\in \O$ and $x,y\in \hua{A}{-}{H}$, we have
		\begin{eqnarray}
		\fx{px}{qy}=\fx{\overline{p}}{\overline{q}}_{\overline{\O}}\fx{x}{y}.
		\end{eqnarray}
\end{lemma}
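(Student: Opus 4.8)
The plan is to avoid any fresh computation by reducing the statement to the already-established Lemma~\ref{lem:tensor product} through the duality operation $(\cdot)^-$ of Lemma~\ref{eq:lemma-duality-261}. The guiding observation is that the hypothesis $\huaa{H}=0$ of the present lemma is exactly the hypothesis $\hua{A}{-}{H^-}=0$ of Lemma~\ref{lem:tensor product} read on the dual module $H^-$, so that one application of that lemma followed by a dualization should deliver the result.

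First I would do the bookkeeping on nuclei. By \eqref{eq:hua H hua -H} we have $\hua{A}{-}{H^-}=\huaa{H}=0$, so $H^-$ satisfies the hypothesis of Lemma~\ref{lem:tensor product}; and by \eqref{eq:hua H- hua- H} we have $\huaa{H^-}=\hua{A}{-}{H}$. Applying Lemma~\ref{lem:tensor product} to the Hilbert left $\O$-module $H^-$ then yields an isomorphism of Hilbert left $\O$-modules
$$H^-\cong\O\otimes_\R\huaa{H^-}=\O\otimes_\R\hua{A}{-}{H}.$$
Next I would transport this isomorphism back to $H$ by applying $(\cdot)^-$ to both sides, using three compatibilities: (i) $(H^-)^-=H$, already noted in the proof of Lemma~\ref{eq:lemma-duality-261}; (ii) $\O^-=\overline{\O}$, which is immediate from Example~\ref{eg:o,o-} since the twisted action is $p\hat{\cdot}x=\overline{p}x$ and $\fx{x}{y}_{\O^-}=\fx{y}{x}_\O=y\overline{x}=\fx{x}{y}_{\overline{\O}}$; and (iii) $(\O\otimes_\R K)^-=\overline{\O}\otimes_\R K$ for any real Hilbert space $K$. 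I would verify (iii) on elementary tensors: the twisted action $p\hat{\cdot}(u\otimes x)=(\overline{p}u)\otimes x$ is precisely the $\overline{\O}$-action, while the two inner products agree because $\fx{v}{u}_\O=\fx{u}{v}_{\overline{\O}}$ and $\fx{x}{y}_K=\fx{y}{x}_K$ by symmetry of the real inner product on $K$. One must also note that an isomorphism of Hilbert left $\O$-modules stays one after the $(\cdot)^-$ twist, which is routine: $\O$-linearity gives $f(\overline{p}x)=\overline{p}f(x)$, and hermiticity swaps the two arguments identically on both sides. Combining, $H=(H^-)^-\cong(\O\otimes_\R\hua{A}{-}{H})^-=\overline{\O}\otimes_\R\hua{A}{-}{H}$.

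For the explicit inner-product identity I would feed the corresponding formula of Lemma~\ref{lem:tensor product} applied to $H^-$, namely $\fx{p\hat{\cdot}x}{q\hat{\cdot}y}_{H^-}=\fx{p}{q}_\O\fx{x}{y}_{H^-}$ for $x,y\in\huaa{H^-}=\hua{A}{-}{H}$, through the definitions $\fx{a}{b}_{H^-}=\fx{b}{a}$ and $p\hat{\cdot}x=\overline{p}x$. Unwinding gives $\fx{\overline{q}y}{\overline{p}x}=(p\overline{q})\fx{y}{x}$; relabelling the octonions and the (conjugate associative) vectors then produces $\fx{px}{qy}=(\overline{q}p)\fx{x}{y}=\fx{\overline{p}}{\overline{q}}_{\overline{\O}}\fx{x}{y}$, as claimed. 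The only genuinely delicate point in this whole scheme is compatibility~(iii): that dualizing turns the tensor factor $\O$ into $\overline{\O}$ while leaving the real factor $K$ untouched. This hinges squarely on the symmetry of the real inner product on $K$, without which the two inner products after the swap would not match; everything else is formal manipulation with the structures already fixed in Lemma~\ref{eq:lemma-duality-261} and Example~\ref{eg:o,o-}.
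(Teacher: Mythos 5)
Your proof is correct, and it leans on the same key mechanism as the paper's --- Lemma \ref{eq:lemma-duality-261} together with Lemma \ref{lem:tensor product} applied to $H^-$ --- but it pushes the duality further than the paper does. The paper constructs the isomorphism $\phi\colon\overline{\O}\otimes_\R\hua{A}{-}{H}\to H$, $\sum e_i\otimes_\R x_i\mapsto\sum\overline{e_i}x_i$, explicitly and verifies $\O$-linearity by a direct computation using $x_i\in\hua{A}{-}{H}$; it invokes $H^-$ only to extract the inner-product identity $\fx{px}{qy}=(\overline{q}p)\fx{x}{y}$, which it then feeds into a basis computation to check that $\phi$ is isometric. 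You instead transport the entire isomorphism $H^-\cong\O\otimes_\R\hua{A}{-}{H}$ back through $(\cdot)^-$, which requires the three compatibilities you list --- and you correctly isolate (iii), $(\O\otimes_\R K)^-=\overline{\O}\otimes_\R K$, as the one point needing verification (the twisted action $(\overline{p}u)\otimes x$ matches the $\overline{\O}$-action, and $\fx{v}{u}_{\O}=\fx{u}{v}_{\overline{\O}}$ plus symmetry of the real inner product on $K$ match the inner products). One small point worth making explicit in your bookkeeping: the real Hilbert space $(\huaa{H^-},\fx{\cdot}{\cdot}_{H^-})$ coincides with $(\hua{A}{-}{H},\fx{\cdot}{\cdot})$ because inner products of conjugate associative elements are real, so the swap $\fx{x}{y}_{H^-}=\fx{y}{x}$ is invisible there; this is the content of the lemma following Lemma \ref{eq:lemma-duality-261}. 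What your route buys is the elimination of all basis computations in favour of formal transport of structure; what the paper's route buys is an explicit formula for the isomorphism, which is used later (e.g.\ in the Cayley--Dickson section).
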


\begin{proof}

	We consider the   $\O$-isomorphism
	\begin{eqnarray*}
		\phi:\quad\overline{\O}\otimes_\R\hua{A}{-}{H}&\to& H\\
		\sum_{i=0}^7e_i\otimes_\R x_i&\mapsto&\sum_{i=0}^7\overline{e_i} x_i.
	\end{eqnarray*}
	Note that the left $\O$-module structure of $\overline{\O}\otimes_\R\hua{A}{-}{H}$ is determined by the $\O$-multiplication of $\overline{\O}$ as follows:
	$$p \Big(\sum_{i=0}^7e_i\otimes_\R x_i\Big):=\sum_{i=0}^7(\overline{p}e_i)\otimes_\R x_i.$$
	Hence it follows from $x_i\in \hua{A}{-}{H}$ that \begin{eqnarray*}
		\phi\Big(p \Big(\sum_{i=0}^7e_i\otimes_\R x_i\Big)\Big)&=&\sum_{i=0}^7\overline{(\overline{p}e_i)} x_i\\
		&=&\sum_{i=0}^7(\overline{e_i}p) x_i\\
		&=&p\sum_{i=0}^7\overline{e_i} x_i\\
		&=&p\phi\Big(\sum_{i=0}^7e_i\otimes_\R x_i\Big).
	\end{eqnarray*}
	This proves that $\phi$ is indeed an $\O$-isomorphism.
	
	We next consider the induced module $H^-$.
	By \eqref{eq:hua H hua -H} and \eqref{eq:hua H- hua- H}, we have $$\hua{A}{-}{H^-}=\huaa{H}=0$$ and $$\huaa{H^-}=\hua{A}{-}{H}.$$
	Using Lemma \ref{lem:tensor product}, we obtain
	$$H^-\cong{\O}\otimes_\R\hua{A}{-}{H}.$$
	That is, for any $p,q\in \O$ and $x,y\in \hua{A}{-}{H}$,
	\begin{eqnarray*}
		\fx{p \hat{\cdot}x}{q\hat{\cdot} y}_{H^-}=\fx{p}{q}_{\O}\fx{x}{y}_{H^-},
	\end{eqnarray*}
	or equivalently,
	\begin{eqnarray*}
		\fx{px}{qy}=(\overline{q}p)\fx{x}{y}=\fx{\overline{p}}{\overline{q}}_{\overline{\O}}\fx{x}{y}.
	\end{eqnarray*}
	Therefore,
	\begin{eqnarray*}
		\Big\langle{\phi\Big(\sum_{i=0}^7e_i\otimes_\R x_i\Big)}, \ \ {\phi\Big(\sum_{i=0}^7e_i\otimes_\R y_i\Big)}\Big\rangle&=&
		\Big\langle{\sum_{i=0}^7\overline{e_i} x_i}, \ \ {\sum_{i=0}^7\overline{e_i} y_i}\Big\rangle\\&=&\sum_{i,j=0}^7\fx{\overline{e_i}x_i}{\overline{e_j}y_j}\\
		&=&\sum_{i,j=0}^7(e_j\overline{e_i})\fx{x_i}{y_j}\\
		&=&\sum_{i,j=0}^7\fx{e_i}{e_j}_{\overline{\O}}\fx{x_i}{y_j}\\
		&=&\Big\langle{\sum_{i=0}^7e_i\otimes_\R x_i}, \ \ {\sum_{i=0}^7e_i\otimes_\R y_i}\Big\rangle_{\overline{\O}\otimes_\R\hua{A}{-}{H}}.
	\end{eqnarray*}
	This proves that $\phi$ is an isomorphism of $\O$-Hilbert spaces.
\end{proof}

\begin{thm}[\textbf{Tensor product decomposition}]\label{thm:tensor product}
	Let $H$ be a Hilbert left $\O$-module. Then  there exists an     $\O$-isomorphism of $\O$-Hilbert spaces
	\begin{eqnarray}\label{eq:general hilbert space}
	H\cong  \Big(\O\otimes_\R\huaa{H}\Big)\oplus \Big(\overline{\O}\otimes_\R\hua{A}{-}{H}\Big).
	\end{eqnarray}
	
\end{thm}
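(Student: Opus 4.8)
The plan is to promote the purely module-theoretic splitting of Theorem~\ref{thm:M=OA+OA^-} to a splitting of $\O$-Hilbert spaces by proving that its two summands are mutually orthogonal, and then to identify each summand via the tensor-product lemmas already established. First I would apply Theorem~\ref{thm:M=OA+OA^-} to write $H=N\oplus N'$ as left $\O$-modules, where $N:=\O\huaa{H}$ and $N':=\O\hua{A}{-}{H}$, each carrying the $\O$-inner product inherited from $H$.

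The crux is the orthogonality of $N$ and $N'$ in $H$. By $\R$-bilinearity of $\fx{\cdot}{\cdot}$ this reduces to showing $\fx{pu}{qv}=0$ for all $p,q\in\O$, $u\in\huaa{H}$ and $v\in\hua{A}{-}{H}$. I would substitute these into identity~\eqref{eq:<pu,pv>in lemma},
$$\fx{pu}{qv}=(p\fx{u}{v})\overline{q}+[pq,u,v]+\fx{[p,q,v]}{u},$$
and annihilate the three terms separately. The first is zero because $\fx{u}{v}=0$ by Lemma~\ref{cor:ass and conj ass in H}; the second is zero because $u\in\huaa{H}$ gives $[pq,u,v]=0$ through the characterization in Lemma~\ref{lem:lem57}. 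For the third I would use $[p,q,v]=(pq-qp)v$ for conjugate associative $v$ (as in the proof of Lemma~\ref{cor:ass and conj ass in H}) and then verify $\fx{rv}{u}=0$ for every $r\in\O$: by hermiticity and identity~\eqref{eq:<u,pv>=<u,v>p^+Ap(uv)}, $\fx{rv}{u}=\overline{\fx{u}{rv}}=\overline{\fx{u}{v}\,\overline{r}+[r,u,v]}=0$, again by Lemma~\ref{cor:ass and conj ass in H} and Lemma~\ref{lem:lem57}. This gives $N\perp N'$.

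It then remains to identify the two summands. Since $N$ and $N'$ are $\O$-submodules of $H$, the conditions defining associative and conjugate associative elements are the same whether tested in the submodule or in $H$, so $\huaa{N}=N\cap\huaa{H}$ and $\hua{A}{-}{N}=N\cap\hua{A}{-}{H}$, and likewise for $N'$. Because the generators satisfy $\huaa{H}\subseteq N$ and $\hua{A}{-}{H}\subseteq N'$ while $N\cap N'=0$, I would conclude $\huaa{N}=\huaa{H}$, $\hua{A}{-}{N}=0$, $\huaa{N'}=0$, and $\hua{A}{-}{N'}=\hua{A}{-}{H}$. Lemma~\ref{lem:tensor product} then yields $N\cong\O\otimes_\R\huaa{H}$ and Lemma~\ref{lem:tensor product o-} yields $N'\cong\overline{\O}\otimes_\R\hua{A}{-}{H}$, both as $\O$-Hilbert spaces. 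Since $H=N\oplus N'$ is by the previous step an orthogonal direct sum of Hilbert left $\O$-modules, assembling these two isomorphisms delivers the claimed $\O$-isomorphism of $\O$-Hilbert spaces.

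The step I expect to be the main obstacle is the orthogonality in the second paragraph. Lemma~\ref{cor:ass and conj ass in H} only records $\fx{u}{v}=0$ for the generators $u\in\huaa{H}$ and $v\in\hua{A}{-}{H}$; upgrading this to orthogonality of the full submodules $\O\huaa{H}$ and $\O\hua{A}{-}{H}$ forces one to control the second-associator cross terms in~\eqref{eq:<pu,pv>in lemma}, and these collapse only after simultaneously using Lemma~\ref{lem:lem57} and the explicit form $[p,q,v]=(pq-qp)v$ of the associator on conjugate associative elements.
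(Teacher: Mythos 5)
Your proposal is correct and follows essentially the same route as the paper: decompose $H$ via Theorem \ref{thm:M=OA+OA^-}, prove the two summands orthogonal using identity \eqref{eq:<pu,pv>in lemma}, and identify them via Lemmas \ref{lem:tensor product} and \ref{lem:tensor product o-}. The only cosmetic difference is that the paper tests orthogonality with $u\in\hua{A}{-}{H}$ and $v\in\huaa{H}$, so the term $\fx{[p,q,v]}{u}$ vanishes immediately by Lemma \ref{lem:lem57}, whereas your ordering requires the short extra computation you supply for that term (which is valid).
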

\begin{proof}
	By Theorem \ref{thm:M=OA+OA^-}, we have $$H=\O\huaa{H}\oplus \O\hua{A}{-}{H}.$$
	Denote $H_1=\O\huaa{H}$ and $H_2=\O\hua{A}{-}{H}$. Then by Lemmas \ref{lem:tensor product} and   \ref{lem:tensor product o-}, $$H_1\cong\O\otimes_\R\huaa{H},\quad H_2\cong\overline{\O}\otimes_\R\hua{A}{-}{H}.$$
	It remains to show that $H_1$ is orthogonal to $H_2$.
	Recall identity \eqref{eq:<pu,pv>in lemma}:
	$$\fx{pu}{qv}=(p\fx{u}{v})\overline{q}+[{pq},u,v]+\fx{[p,q,v]}{u}.$$
	If $u\in \hua{A}{-}{H}$ and $v\in \huaa{H}$, then it follows from \eqref{eq:<uv>=0} and  Lemma \ref{lem:lem57} that $$\fx{pu}{qv}=0.$$
	This shows that
	$H_1$ is orthogonal to $H_2$ as desired.
\end{proof}
\begin{rem}
 	Note that the right-hand side of equality \eqref{eq:general hilbert space} with $\huaa{H}$ and $\hua{A}{-}{H}$ replaced by any two real Hilbert spaces is always an $\O$-Hilbert space. Hence this theorem gives a complete classification of Hilbert left $\O$-modules.
\end{rem}

	\section{Weak associative orthonormal basis and Parseval's theorem}
 	As observed by Goldstine and Horwitz  in the appendix of \cite{goldstine1964hilbert2},   the  Parseval theorem may fail  for any  orthonormal basis in an $\O$-Hilbert space. We introduce a new notion of weak associative orthonormal basis and  show that the Parseval equality holds for an orthonormal basis if and only if it is  weak associative.
\begin{mydef}
	A subset $S=\{x_{\alpha}\}_{\alpha\in \Lambda}$ of an $\O$-Hilbert space is said to be an \textbf{orthonormal system} if $$\fx{x_{\alpha}}{x_{\beta}}=\delta_{\alpha \beta}.$$
	An orthonormal system $S=\{x_{\alpha}\}_{\alpha\in \Lambda}$ is said to be an \textbf{orthonormal basis} if there does not exist other orthonormal system $S'$ such that $S\subsetneqq S'$.
\end{mydef}
{As usual, we have the following characterizations of an orthonormal basis. The proof runs in the same manner as in the classical case and is omitted here.}
\begin{lemma}\label{lem:ONB<->nexist nonzero elem perp to OS}
	Let $S$ be an orthonormal system. Then $S$ is an orthonormal   basis if and only if  there does not exist non-zero element which is orthogonal to every element of $S$.
	
\end{lemma}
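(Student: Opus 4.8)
The plan is to prove the equivalence by establishing each implication in contrapositive form, so that one half reduces to a normalization step and the other to an extraction step. The crucial observation is that the whole argument uses only the real-bilinearity of $\fx{\cdot}{\cdot}$ together with the positivity and Hermiticity axioms; no octonionic linearity is ever invoked. In particular, rescaling by a real scalar produces no second associator, so the non-associativity of $\O$ plays no role and the classical Hilbert-space proof transfers verbatim.

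For the implication that an orthonormal basis admits no non-zero vector orthogonal to all of $S$, I would argue by contraposition. Suppose there is a non-zero $z\in H$ with $\fx{z}{x_\alpha}=0$ for every $x_\alpha\in S$. Since $z\neq 0$, positivity gives $\fx{z}{z}\in\spr^+$ with $\fx{z}{z}>0$, so $\fsh{z}>0$ and I may set $x:=\fsh{z}^{-1}z$. Because $\fx{\cdot}{\cdot}$ is real-linear in its first slot, $\fx{x}{x}=\fsh{z}^{-2}\fx{z}{z}=1$ and $\fx{x}{x_\alpha}=\fsh{z}^{-1}\fx{z}{x_\alpha}=0$ for all $\alpha$; the reversed products $\fx{x_\alpha}{x}$ vanish by Hermiticity. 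Moreover $x\notin S$, for otherwise $x=x_\beta$ for some $\beta$ would force $1=\fx{x_\beta}{x_\beta}=\fx{x}{x_\beta}=0$. Hence $S\cup\{x\}$ is an orthonormal system strictly containing $S$, contradicting the maximality built into the definition of an orthonormal basis.

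For the converse I would again use contraposition: assume $S$ is not an orthonormal basis, so by definition there is an orthonormal system $S'$ with $S\subsetneqq S'$. Choose any $y\in S'\setminus S$. Then $\fx{y}{y}=1$ forces $y\neq 0$ by positivity, and for every $x_\alpha\in S\subseteq S'$ we have $\fx{y}{x_\alpha}=0$ by the orthonormality of $S'$, since $y$ and $x_\alpha$ are distinct elements of $S'$. Thus $y$ is a non-zero element orthogonal to every element of $S$, which is exactly the negation of the hypothesis. Combining the two implications yields the stated equivalence.

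As for obstacles, there is essentially none of substance. The only step that could a priori be delicate in the octonionic setting, namely forming $\fsh{z}^{-1}z$ and computing its inner products, is harmless precisely because the scalar $\fsh{z}^{-1}$ is real and the inner product is real-bilinear. Consequently the argument never touches the associator identities \eqref{eq:<u,pv>=<u,v>p^+Ap(uv)}--\eqref{eq:<pu,pv>in lemma}, and the reasoning is word-for-word the same as in the complex case; this is exactly why the proof may be said to run in the same manner as in the classical setting.
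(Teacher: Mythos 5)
Your proof is correct and is exactly the classical maximality argument that the paper has in mind when it omits the proof with the remark that it ``runs in the same manner as in the classical case'': both directions are handled by contraposition, and the only potentially octonionic step (normalizing $z$ by the real scalar $\fsh{z}^{-1}$) is indeed innocuous because the $\O$-inner product is $\R$-bilinear and real scalars generate no associator terms. No gaps; nothing further is needed.
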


In contrast to the classical cases, it is meaningless to
  define the dimension of an 	$\O$-Hilbert space as the cardinality  of an orthonormal basis as shown in the following
  special  case  of $\O^2$.
\begin{eg}\label{eg:onb of O2}
	The $\O$-Hilbert space $\O^2$ admits two sets of basis.  One has cardinality $2$ and the other has cardinality $4$.
	
	(1). The $\O$-Hilbert space $\O^2$ admits an  orthonormal basis given by
	$$(0,1),\quad(1,0).$$

	(2).  The $\O$-Hilbert space $\O^2$ admits an alternative orthonormal basis given by
	
	\begin{eqnarray}\label{eq:ex in O2}
	x_1=\dfrac{1}{\sqrt{2}}(e_1,e_2),\quad
	x_2=\dfrac{1}{\sqrt{2}}(e_4,e_7),\quad
	x_3=\dfrac{1}{\sqrt{2}}(e_6,e_5),\quad
	x_4=\dfrac{1}{\sqrt{2}}(1,e_3).
	\end{eqnarray}

	
	To show  that   \eqref{eq:ex in O2} is an orthonormal basis, we first observe that
	  $\{x_{n}\}_{n=1}^4$ is an orthonormal system of $\O^2$. If  there exists an element $x=(a,b)\in \O^2$
	orthogonal to $x_{1},\dots, x_4$, then  from the assumption that $\fx{x}{x_4}=0$ we have $$a=be_3.$$
	Since $\fx{x}{x_1}=0$, we get
	\begin{eqnarray}\label{eqpf:eg b}
	(be_3)e_1+be_2=2be_2+[b,e_3,e_1]=0.
	\end{eqnarray}
	Using identity \eqref{eq:<[qpx]y>0=<[qpy]x>} in the $\O$-Hilbert space $\O$, we obtain
	\begin{eqnarray}\label{eqpf:[IJx]}
	[I,J,x]\in {\mathbb{H}_{I,J}}^{\perp_\R}
	\end{eqnarray}
	for any mutually orthogonal pair $(I,J)$ of $\mathbb{S}_6$ and all $x\in \O$. Here the notation $\mathbb{H}_{I,J}$ stands for the subset spanned by
	$\{1,I,J,IJ\}$, which was  introduced in Subsection \ref{subsec:O}.
	Combining \eqref{eqpf:[IJx]} with \eqref{eqpf:eg b}, we get
	$$be_2\in {\mathbb{H}_{e_1,e_3}}^{\perp_\R}.$$
	This yields that $b_0=b_1=b_2=b_3=0$  if we write $$b=b_0+\sum_{i=1}^7e_ib_i.$$
	The assumptions that  $\fx{x}{x_2}=\fx{x}{x_3}=0$ imply that
	\begin{numcases}{}
	2be_7+[b,e_3,e_4]=0,\notag\\
	2be_5+[b,e_3,e_6]=0.\notag
	\end{numcases}
	By similar argument, we conclude that $b_4=b_5=b_6=b_7=0$ and hence $b=0$. This  means that $x=0$. We thus conclude from Lemma \ref{lem:ONB<->nexist nonzero elem perp to OS}  that $\{x_{n}\}_{n=1}^4$ is an orthonormal basis.

\end{eg}

The Parseval equality may not hold for
the orthonormal basis \eqref{eq:ex in O2}.
		Indeed, we take $y=(1,0)\in \O^2$. By direct calculation, we have
	$$\fx{y}{x_1}=\dfrac{1}{\sqrt{2}}\overline{e_1},\quad \fx{y}{x_2}=\dfrac{1}{\sqrt{2}}\overline{e_4},\quad
	\fx{y}{x_3}=\dfrac{1}{\sqrt{2}}\overline{e_6},\quad
	\fx{y}{x_4}=\dfrac{1}{\sqrt{2}}.$$
	It follows that $$y-\sum_{n=1}^4\fx{y}{x_n}x_n=(-1,e_3)$$
	and $$\sum_{n=1}^4\fsh{\fx{y}{x_n}}^2\neq \fsh{y}^2.$$


	  Denote $p_i=\fx{y}{x_i}$ for $i=1,\dots,4$,
	 Using identity \eqref{eq:<[qpx]y>0=<[qpy]x>} and the left alternativity of associators, we obtain
	\begin{align*}
	\sum_{m,n=1}^4\fx{[\fx{y}{x_n},\fx{y}{x_m},x_m]}{x_n}_\spr&=2\sum_{m<n}\fx{[\fx{y}{x_n},\fx{y}{x_m},x_m]}{x_n}_\spr\\
	&=2\big(\fx{[p_1,p_2,x_2]}{x_1}_\spr+\fx{[p_1,p_3,x_3]}{x_1}_\spr+\fx{[p_2,p_3,x_3]}{x_2}_\spr\big)\\
	&=\dfrac{1}{2} \re([{e_1},{e_4},e_7]\overline{e_2}+[{e_1},e_6,{e_5}]\overline{e_2}+[{e_4},{e_6},e_5]\overline{e_7})\\
	&=3.
	\end{align*}
	Therefore, we have
	$$\sum_{n=1}^4\left|\fx{y}{x_n}\right|^2+\Big\|{y-\sum_{n=1}^4\fx{y}
{x_n}x_n}\Big\|^2-\sum_{m,n=1}^2\fx{[\fx{y}{x_n},\fx{y}{x_m},x_m]}{x_n}_\spr=\fsh{y}^2.$$
In fact, the above  equality even  holds  generally.

\begin{lemma}\label{lem:Bessel}
	Let $\{x_{n}\}_{n=1}^N$ be an orthonormal system  of an $\O$-Hilbert space $H$. Then for all $x\in H$ we have
	\begin{eqnarray}\label{eq:bessel}
	\left|\left|x\right|\right|^2=\sum_{n=1}^N\left|\fx{x}{x_n}\right|^2+
\Big\|{x-\sum_{n=1}^N\fx{x}{x_n}x_n}\Big\|^2-\sum_{m,n=1}^N\fx{[\fx{x}{x_n},\fx{x}{x_m},x_m]}{x_n}_{\spr}.
	\end{eqnarray}
	In particular, there holds the octonionic version of Bessel's inequality:
	\begin{eqnarray}
	\sum_{n=1}^N\left|\fx{x}{x_n}\right|^2-\sum_{m,n=1}^N\fx{[\fx{x}{x_n},\fx{x}{x_m},x_m]}{x_n}_{\spr}\leqslant \fsh{x}^2.
	\end{eqnarray}
	
\end{lemma}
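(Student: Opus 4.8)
The plan is to compute $\bigl\|x-\sum_{n=1}^N\fx{x}{x_n}x_n\bigr\|^2$ directly by expanding the $\O$-inner product, and then to read off \eqref{eq:bessel} by rearrangement. Throughout I would abbreviate $p_n:=\fx{x}{x_n}$, so that the quantity $\fsh{x-\sum_n p_nx_n}^2$ equals the real number $\fx{x-\sum_n p_nx_n}{x-\sum_m p_mx_m}$. Using only the $\spr$-bilinearity of the inner product, this splits into four groups of terms: $\fx{x}{x}$, the two mixed sums $-\sum_m\fx{x}{p_mx_m}$ and $-\sum_n\fx{p_nx_n}{x}$, and the double sum $\sum_{n,m}\fx{p_nx_n}{p_mx_m}$.

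First I would simplify each group using the algebraic identities already assembled. For the mixed sums, identity \eqref{eq:<u,pv>=<u,v>p^+Ap(uv)} gives $\fx{x}{p_mx_m}=\fx{x}{x_m}\overline{p_m}+[p_m,x,x_m]=|p_m|^2+[p_m,x,x_m]$, while the definition of the second associator together with hermiticity gives $\fx{p_nx_n}{x}=|p_n|^2+[p_n,x_n,x]$; by the antisymmetry \eqref{eq:Ap(u,v)=-Ap(v,u)} one has $[p_n,x_n,x]=-[p_n,x,x_n]$. Hence the two associator contributions $-\sum_m[p_m,x,x_m]$ and $+\sum_n[p_n,x,x_n]$ cancel exactly, and the mixed sums together contribute $-2\sum_n|p_n|^2$. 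For the double sum I would apply \eqref{eq:<pu,pv>in lemma}, which yields $\fx{p_nx_n}{p_mx_m}=(p_n\fx{x_n}{x_m})\overline{p_m}+[p_np_m,x_n,x_m]+\fx{[p_n,p_m,x_m]}{x_n}$; invoking orthonormality $\fx{x_n}{x_m}=\delta_{nm}$ collapses the first piece to $\sum_n|p_n|^2$. Collecting everything leaves
\begin{equation*}
\Bigl\|x-\sum_{n}p_nx_n\Bigr\|^2=\fsh{x}^2-\sum_n|p_n|^2+\sum_{n,m}[p_np_m,x_n,x_m]+\sum_{n,m}\fx{[p_n,p_m,x_m]}{x_n}.
\end{equation*}

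The decisive step, and the one genuinely special to the octonionic setting, is to observe that the left-hand side is a real number (it is $\fx{y}{y}$ for $y=x-\sum_n p_nx_n$, hence real by positivity), as are $\fsh{x}^2$ and $\sum_n|p_n|^2$. Consequently the two remaining associator sums add up to a real quantity and may be replaced by their real parts. Here para-linearity enters: since $[p_np_m,x_n,x_m]=\fx{(p_np_m)x_n}{x_m}-(p_np_m)\fx{x_n}{x_m}$ is a second associator of $H$, its real part vanishes, so the entire sum $\sum_{n,m}[p_np_m,x_n,x_m]$ drops out upon taking real parts, while $\sum_{n,m}\fx{[p_n,p_m,x_m]}{x_n}$ survives as $\sum_{n,m}\fx{[p_n,p_m,x_m]}{x_n}_\spr$. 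Rearranging the resulting real identity produces \eqref{eq:bessel}, and the octonionic Bessel inequality follows at once by discarding the manifestly nonnegative term $\bigl\|x-\sum_n p_nx_n\bigr\|^2$. I expect the main obstacle to be conceptual rather than computational: one must recognize that the two non-associative correction terms play different roles—one annihilated by para-linearity, the other persisting as the true octonionic defect in Parseval's equality—so the care lies in tracking real parts faithfully rather than in any delicate estimate.
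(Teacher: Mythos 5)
Your proof is correct and follows essentially the same route as the paper: both expand the square of the remainder using the identities \eqref{eq:<u,pv>=<u,v>p^+Ap(uv)} and \eqref{eq:<pu,pv>in lemma}, invoke orthonormality to collapse $(p_n\fx{x_n}{x_m})\overline{p_m}$ to $\sum_n|p_n|^2$, and use para-linearity (the vanishing real part of second associators) to discard $\sum_{n,m}[p_np_m,x_n,x_m]$, leaving $\sum_{n,m}\fx{[p_n,p_m,x_m]}{x_n}_\spr$ as the genuine octonionic correction. The only cosmetic difference is that you keep full octonionic values in the cross terms and cancel $[p_m,x,x_m]$ against $[p_n,x,x_n]$ via antisymmetry before taking real parts, whereas the paper passes to $2\re\fx{u}{x}$ immediately; the two bookkeepings are equivalent.
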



\begin{proof}
	Set $$p_n=\fx{x}{x_n}, \quad u=\sum_{n=1}^N p_nx_n, \quad  v=x-u.$$
	By calculation, we  have
	\begin{align}\label{eqpf:fsh x}
	\fsh{x}^2&=\fx{u+v}{u+v}\notag\\
	&=\fsh{u}^2+\fsh{v}^2+2\re\fx{u}{x-u}\notag\\
	&=\fsh{v}^2-\fsh{u}^2+2\re\fx{u}{x}.
	\end{align}
	Since
	\begin{align*}
	\fx{u}{x}&=\Big\langle{\sum_{i=1}^N p_nx_n}, \ \ {x}\Big\rangle\\
	&=\sum_{n=1}^N p_n\fx{x_n}{x}+[{p_n},x_n,x]\\
	&=\sum_{n=1}^N\left|\fx{x}{x_n}\right|^2+\sum_{n=1}^N [{p_n},x_n,x],
	\end{align*}
	we conclude that $$\re\fx{u}{x}=\sum_{n=1}^N\left|\fx{x}{x_n}\right|^2.$$
	By \eqref{eq:<pu,pv>in lemma},
	we obtain
	\begin{eqnarray*}
	\fsh{u}^2&=&\Big\langle{\sum_{n=1}^N p_nx_n}, \ \ {\sum_{m=1}^N p_mx_m}\Big\rangle
\\
	&=&\sum _{m,n=1}^N\big(p_n\fx{x_n}
{x_m}\big)\overline{p_m}+[{p_np_m},x_n,x_m]+\fx{[p_n,p_m,x_m]}{x_n}
\\	&=&\sum_{n=1}^N\left|\fx{x}{x_n}\right|^2+\sum_{m,n=1}^N[{p_np_m},x_n,x_m]+\fx{[\fx{x}{x_n},\fx{x}{x_m},x_m]}{x_n},
	\end{eqnarray*}
	so that
	\begin{align*}
	\fsh{u}^2 =\re \fsh{u}^2 &=\sum_{n=1}^N\left|\fx{x}{x_n}\right|^2+\sum_{m,n=1}^N\fx{[\fx{x}{x_n},\fx{x}{x_m},x_m]}{x_n}_\spr.
	\end{align*}
	Hence \eqref{eqpf:fsh x} becomes
	\begin{eqnarray*}	\fsh{x}^2&=&\fsh{v}^2-\Big(\sum_{n=1}^N\left|\fx{x}{x_n}\right|^2+\sum_{m,n=1}^N\fx{[\fx{x}{x_n},\fx{x}{x_m},x_m]}{x_n}_\spr\Big)+2\sum_{n=1}^N\left|\fx{x}{x_n}\right|^2\\ &=&\sum_{n=1}^N\left|\fx{x}{x_n}\right|^2+\Big\|{x-\sum_{n=1}^N\fx{x}{x_n}x_n}\Big\|^2-\sum_{m,n=1}^N\fx{[\fx{x}{x_n},\fx{x}{x_m},x_m]}{x_n}_\spr.
	\end{eqnarray*}
	This proves the lemma.
\end{proof}
\begin{rem}
	In Example \ref{eg:onb of O2},
	it is worth pointing out that $\{x_{n}\}_{n=1}^4$ is not $\O$-linearly independent. Indeed, if we denote $$x_5=\dfrac{1}{\sqrt{2}}(1,-e_3),$$ then we have
	$$x_1=e_1x_5,\quad x_2=e_4x_5,\quad x_3=e_6x_5.$$
	Moreover, it is easy to check that $\{x_{n}\}_{n=4}^5$  is also an orthonormal basis.
	By direct calculation, for $y=(1,0)$ we have
	$$y-\sum_{i=4}^5\fx{y}{x_i}x_i=0$$
	and $$\fsh{y}^2=\sum_{i=4}^5\left|\fx{y}{x_i}\right|^2.$$
	In fact, {this is not accidental in terms of   
		the corollary	 below. The point is that the orthonormal basis $\{x_{n}\}_{n=4}^5$ satisfies
		$$[p,x_4,x_5]=0$$ for all $p\in \O$.}
\end{rem}	
\begin{cor}\label{cor:bessel ineq}
	Under the assumption of Lemma \ref{lem:Bessel} and suppose $\{x_{n}\}_{n=1}^N$ satisfies
	\begin{eqnarray}\label{eq:sons condition}
	[p,x_n,x_m]=0
	\end{eqnarray}
	for all  $p\in\spo$ and for each $ m,n=1,\dots,N$.
	Then we have
	\begin{eqnarray}\label{eq:beseel eq}
	\left|\left|x\right|\right|^2=\sum_{n=1}^N\left|\fx{x}{x_n}\right|^2
+\Big\|{x-\sum_{n=1}^N\fx{x}{x_n}x_n}\Big\|^2	
	\end{eqnarray}	
	and in this case we obtain the classical Bessel's inequality
	\begin{eqnarray}\label{eqcor:bessel ineq}
	\sum_{n=1}^N\left|\fx{x}{x_n}\right|^2\leqslant \fsh{x}^2.
	\end{eqnarray}
\end{cor}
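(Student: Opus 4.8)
The plan is to read off the corollary directly from the master identity \eqref{eq:bessel} of Lemma \ref{lem:Bessel}. Writing $p_n=\fx{x}{x_n}$, that identity reads
$$\fsh{x}^2=\sum_{n=1}^N|p_n|^2+\Big\|x-\sum_{n=1}^Np_nx_n\Big\|^2-\sum_{m,n=1}^N\fx{[p_n,p_m,x_m]}{x_n}_\spr,$$
so it suffices to show that the correction term $\sum_{m,n}\fx{[p_n,p_m,x_m]}{x_n}_\spr$ vanishes under the hypothesis \eqref{eq:sons condition}. The subtlety to keep in mind is that each summand involves the \emph{module} associator $[p_n,p_m,x_m]=(p_np_m)x_m-p_n(p_mx_m)$, whereas \eqref{eq:sons condition} concerns the \emph{second} associator; hence the real work is to bridge the two, and I will do it term by term rather than invoking the full nucleus characterization of Lemma \ref{lem:lem57}.

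First I would record that the hypothesis is symmetric in its $H$-entries: by \eqref{eq:Ap(u,v)=-Ap(v,u)} the vanishing of $[p,x_n,x_m]$ for all $p\in\spo$ and all $m,n$ is equivalent to the vanishing of $[p,x_m,x_n]$. Thus $[q,x_m,x_n]=0$ for every $q\in\spo$ and all $m,n$; in particular $[p_np_m,x_m,x_n]=0$ and $[p_m,x_m,x_n]=0$.

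Next I would expand a single summand by applying the defining relation $\fx{qw}{z}=q\fx{w}{z}+[q,w,z]$ of the second associator twice. Using orthonormality $\fx{x_m}{x_n}=\delta_{mn}$ together with the two vanishing second associators above, I expect to obtain
$$\fx{(p_np_m)x_m}{x_n}=(p_np_m)\delta_{mn},\qquad \fx{p_n(p_mx_m)}{x_n}=(p_np_m)\delta_{mn}+[p_n,p_mx_m,x_n],$$
whence $\fx{[p_n,p_m,x_m]}{x_n}=-[p_n,p_mx_m,x_n]$. The crucial observation is that the leftover is itself a single second associator: by $\spo$-para-linearity every second associator has vanishing real part, so $\fx{[p_n,p_m,x_m]}{x_n}_\spr=-\re[p_n,p_mx_m,x_n]=0$ for each pair $(m,n)$.

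Consequently the entire correction sum is zero, and \eqref{eq:bessel} collapses to \eqref{eq:beseel eq}; discarding the nonnegative term $\big\|x-\sum_n p_nx_n\big\|^2$ then yields \eqref{eqcor:bessel ineq}. The main obstacle is precisely the expansion in the third paragraph: one must notice that orthonormality makes $\fx{x_m}{x_n}$ real, so that no genuine octonionic associator among the $p_i$ survives (a real scalar associates with everything), and that what remains after applying the hypothesis is a \emph{pure} second associator, which is annihilated on passing to real parts. Everything else is bookkeeping, and notably the finite, pairwise form of \eqref{eq:sons condition} is enough because the cancellation happens summand by summand.
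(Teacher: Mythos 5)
Your argument is correct and is essentially the paper's proof: both reduce the corollary to showing the correction term in \eqref{eq:bessel} vanishes by converting $\fx{[p_n,p_m,x_m]}{x_n}$ into second associators and killing the real part via para-linearity. The only difference is that you re-derive the needed special case of identity \eqref{eq:Apq(v,u)=<[p,q,v],u>-[p,q,<v,u>]+pAq(v,u)+Ap(qv,u) } by hand (using orthonormality and the hypothesis to make the octonion-associator and $p[q,v,u]$ terms drop out), whereas the paper simply cites that identity and takes real parts.
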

\begin{proof}
	If $	[p,x_n,x_m]=0$, then by taking the real part on both sides of \eqref{eq:Apq(v,u)=<[p,q,v],u>-[p,q,<v,u>]+pAq(v,u)+Ap(qv,u) }, we have
	$$\sum_{m,n=1}^N\fx{[\fx{x}{x_n},\fx{x}{x_m},x_m]}{x_n}_{\spr}=-\sum_{m,n=1}^N\re\left(\fx{x}{x_n}	[\fx{x}{x_m},x_n,x_m]\right)=0. $$
	Therefore,  \eqref{eq:beseel eq} follows from \eqref{eq:bessel} directly.
	This completes the proof.
\end{proof}
This inspires us to introduce the following notion.
\begin{mydef}\label{def:weak ass}
	An orthonormal basis $S=\{x_{\alpha}\}_{\alpha\in \Lambda}$ is said to be a \textbf{weak associative orthonormal basis} if $S$  satisfies  condition \eqref{eq:sons condition} in Corollary \ref{cor:bessel ineq}.
	
\end{mydef}

\begin{lemma}\label{lem:CONB is ONB R}
	 If $S=\{x_{\alpha}\}_{\alpha\in \Lambda}$ is a weak associative orthonormal basis in  a Hilbert left $\O$-module   $H$,
	 	then $$\widetilde{S}:=\{e_ix_\alpha\mid i=0,\dots,7, \ \alpha\in \Lambda\}$$  is a    real orthonormal basis of the real Hilbert space $(H,\fx{\cdot}{\cdot}_{\spr})$.
\end{lemma}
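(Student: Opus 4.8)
The plan is to verify directly the two defining properties of a real orthonormal basis of $(H,\fx{\cdot}{\cdot}_\spr)$: first that $\widetilde S$ is a real orthonormal \emph{system}, and then that it is \emph{maximal}, i.e.\ its real-orthogonal complement is trivial. Throughout, the two facts that do all the work are weak associativity, which by \eqref{eq:sons condition} reads $[p,x_\alpha,x_\beta]=0$ for all $p\in\O$ and all $\alpha,\beta\in\Lambda$, together with the $\O$-para-linearity of the inner product (Definition \ref{def:hilbert O space}$(1)$), which guarantees that \emph{every} second associator has vanishing real part, $\re[p,u,v]=0$.

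For orthonormality I would compute $\fx{e_ix_\alpha}{e_jx_\beta}_\spr$ for $i,j\in\{0,\dots,7\}$ and $\alpha,\beta\in\Lambda$. Applying identity \eqref{eq:<u,pv>=<u,v>p^+Ap(uv)} with $u=e_ix_\alpha$, $p=e_j$, $v=x_\beta$ gives
\[
\fx{e_ix_\alpha}{e_jx_\beta}=\fx{e_ix_\alpha}{x_\beta}\,\overline{e_j}+[e_j,e_ix_\alpha,x_\beta].
\]
Taking real parts, the second associator drops out by para-linearity, while weak associativity simplifies $\fx{e_ix_\alpha}{x_\beta}=e_i\fx{x_\alpha}{x_\beta}=e_i\delta_{\alpha\beta}$. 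Hence $\fx{e_ix_\alpha}{e_jx_\beta}_\spr=\delta_{\alpha\beta}\re(e_i\overline{e_j})=\delta_{\alpha\beta}\delta_{ij}$, since $e_0,\dots,e_7$ is real-orthonormal in $\O$; in particular each $e_ix_\alpha$ has unit norm. This shows $\widetilde S$ is an orthonormal system. One could instead expand via \eqref{eq:<pu,pv>in lemma}, but then one must also argue that the module-associator term $\re\fx{[e_i,e_j,x_\beta]}{x_\alpha}$ vanishes; contracting against $e_j$ first through \eqref{eq:<u,pv>=<u,v>p^+Ap(uv)} avoids this detour entirely.

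For maximality, suppose $y\in H$ is real-orthogonal to every element of $\widetilde S$, i.e.\ $\fx{y}{e_ix_\alpha}_\spr=0$ for all $i,\alpha$. Again by \eqref{eq:<u,pv>=<u,v>p^+Ap(uv)} and the vanishing of the real part of the second associator,
\[
\fx{y}{e_ix_\alpha}_\spr=\re\big(\fx{y}{x_\alpha}\,\overline{e_i}\big),
\]
and $\re(q\,\overline{e_i})$ is precisely the $e_i$-coordinate of $q\in\O$. Letting $i$ range over $0,\dots,7$ therefore forces $\fx{y}{x_\alpha}=0$, and this for every $\alpha$ says $y$ is $\O$-orthogonal to the whole $\O$-orthonormal basis $S$. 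By Lemma \ref{lem:ONB<->nexist nonzero elem perp to OS} this gives $y=0$. Since $(H,\fx{\cdot}{\cdot}_\spr)$ is a real Hilbert space, the classical criterion (trivial orthogonal complement) then yields that $\widetilde S$ is a real orthonormal basis.

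I would expect no deep obstacle here; the essential point — and the only place where real care is needed — is to pass to the real inner product at the right moment so that all associator terms disappear by para-linearity, and then to simplify the remaining octonionic inner product by weak associativity rather than attempting to control the module associator directly. It is worth noting that weak associativity is used only in the orthonormality step (without it $\widetilde S$ would fail to be orthonormal), whereas maximality follows from para-linearity and the maximality of $S$ alone.
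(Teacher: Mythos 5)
Your proposal is correct and follows the same overall strategy as the paper: first show $\widetilde S$ is a real orthonormal system using para-linearity to kill the real parts of second associators and weak associativity to kill the module associators, then prove maximality by recovering $\fx{y}{x_\alpha}$ from the real inner products $\fx{y}{e_ix_\alpha}_\spr$ and invoking Lemma \ref{lem:ONB<->nexist nonzero elem perp to OS}. The only (harmless) difference is in the orthonormality computation, where you peel off $e_j$ via \eqref{eq:<u,pv>=<u,v>p^+Ap(uv)} and para-linearity rather than expanding with \eqref{eq:<pu,pv>in lemma} and then disposing of the term $\fx{[e_i,e_j,x_\beta]}{x_\alpha}_\R$ via the real part of \eqref{eq:Apq(v,u)=<[p,q,v],u>-[p,q,<v,u>]+pAq(v,u)+Ap(qv,u) }, as the paper does.
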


\begin{proof}
	According to identities \eqref{eq:<pu,pv>in lemma} and  \eqref{eq:Apq(v,u)=<[p,q,v],u>-[p,q,<v,u>]+pAq(v,u)+Ap(qv,u) }, we have
	\begin{align*}
	\fx{e_ix_\alpha}{e_jx_\beta}_{\spr}&=\re \left(e_i\fx{x_\alpha}{x_\beta}\right)\overline{e_j}+[e_i{e_j},x_\alpha,x_\beta]+\fx{[e_i,e_j,x_\beta]}{x_\alpha}_\R\\
	&=\re(e_i\overline{e_j}\delta_{\alpha\beta})-\re(e_i[e_j,x_\beta,x_\alpha])\\
	&=\delta_{ij}\delta_{\alpha\beta}
	\end{align*}
	for each $i,j=0,\dots,7$ and each $\alpha,\beta\in \Lambda$.
	Hence $\widetilde{S}$ is a real orthonormal system of  $(H,\fx{\cdot}{\cdot}_{\spr})$.
	We next show that $\widetilde{S}$ is a real orthonormal basis.
	If not, then there exists a non-zero element $x\in H$ such that $x$ is orthogonal to $\widetilde{S}$ with respect to the real inner product $\fx{\cdot}{\cdot}_{\spr}$.
	Thus we have $$\fx{x_\alpha}{x}=\left\langle x_\alpha,x\right\rangle _{\spr}-\sum_{i=0}^7 \left\langle e_ix_\alpha,x\right\rangle _{\spr}e_i=0.$$
	In view of Lemma \ref{lem:ONB<->nexist nonzero elem perp to OS}, this is impossible.
\end{proof}

Now we are in   position	to establish the Parseval theorem for  weak associative orthonormal   bases.
\begin{thm}[\textbf{Parseval theorem}]\label{thm:Parseval thm}
	Let $H$ be an $\O$-Hilbert space and $S=\{x_{\alpha}\}_{\alpha\in \Lambda}$ be a weak associative orthonormal   basis. Then  any $x\in H$  can be  uniquely expressed   as
	\begin{equation*}
	x=\sum_{\alpha\in \Lambda}\fx{x}{x_{\alpha}}x_{\alpha}
	\end{equation*}
	and there holds
	\begin{equation*}
	\fsh{x}^2=\sum_{\alpha\in \Lambda}|\fx{x}{x_{\alpha}}|^2.
	\end{equation*}
\end{thm}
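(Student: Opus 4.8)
The plan is to reduce the octonionic Parseval theorem to the real Parseval theorem via the real orthonormal basis $\widetilde{S}$ produced by Lemma~\ref{lem:CONB is ONB R}. The key observation is that a weak associative orthonormal basis $S = \{x_\alpha\}_{\alpha\in\Lambda}$ gives rise to a genuine real orthonormal basis $\widetilde{S} = \{e_i x_\alpha\}$ of the real Hilbert space $(H, \fx{\cdot}{\cdot}_\spr)$, for which the classical Parseval machinery applies without modification. So the first step is to invoke the classical real Parseval theorem: every $x\in H$ expands as $x = \sum_{i,\alpha} \fx{x}{e_i x_\alpha}_\spr \, e_i x_\alpha$ with convergence in the real norm (which coincides with $\fsh{\cdot}$), and $\fsh{x}^2 = \sum_{i,\alpha} |\fx{x}{e_i x_\alpha}_\spr|^2$.

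The second step is to repackage the real coefficients $\fx{x}{e_i x_\alpha}_\spr$ into the octonionic coefficients $\fx{x}{x_\alpha}$. First I would establish the identity
\begin{equation*}
\fx{x}{x_\alpha} = \sum_{i=0}^7 \fx{x}{e_i x_\alpha}_\spr \, e_i,
\end{equation*}
which holds because the real and imaginary components of the octonion $\fx{x}{x_\alpha}$ are recovered by pairing against the $e_i$ in the real inner product, using axiom~(e) together with the orthonormality $\fx{e_i x_\alpha}{e_j x_\alpha}_\spr = \delta_{ij}$ already verified inside the proof of Lemma~\ref{lem:CONB is ONB R}. Grouping the real expansion of $x$ by $\alpha$ then gives $\sum_i \fx{x}{e_i x_\alpha}_\spr\, e_i x_\alpha = \big(\sum_i \fx{x}{e_i x_\alpha}_\spr\, e_i\big) x_\alpha = \fx{x}{x_\alpha} x_\alpha$, which yields the desired octonionic expansion $x = \sum_\alpha \fx{x}{x_\alpha} x_\alpha$.

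For the norm identity, I would show $|\fx{x}{x_\alpha}|^2 = \sum_{i=0}^7 |\fx{x}{e_i x_\alpha}_\spr|^2$, which is immediate from the coefficient identity above since $\{e_i\}$ is a real orthonormal basis of $\O$ and $|\cdot|$ is the Euclidean norm. Summing over $\alpha$ and comparing with the real Parseval identity for $\widetilde{S}$ then gives
\begin{equation*}
\fsh{x}^2 = \sum_{i,\alpha} |\fx{x}{e_i x_\alpha}_\spr|^2 = \sum_\alpha |\fx{x}{x_\alpha}|^2,
\end{equation*}
completing the proof. Uniqueness of the expansion follows from orthonormality of $S$: pairing $x = \sum_\beta \fx{x}{x_\beta} x_\beta$ against $x_\alpha$ recovers the coefficient, once one checks that the second-associator corrections vanish under the weak associativity hypothesis.

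The main obstacle I anticipate is the bookkeeping for convergence and the regrouping of the (a priori unordered, possibly uncountable) real sum over the pair index $(i,\alpha)$ into a sum over $\alpha$ of finite inner sums over $i$; since each inner sum has exactly eight terms this regrouping is harmless, but it must be justified so that the rearrangement of the series is legitimate. A secondary technical point is verifying the coefficient identity $\fx{x}{x_\alpha} = \sum_i \fx{x}{e_i x_\alpha}_\spr\, e_i$ carefully, as this is where the weak associativity of $S$ (through the vanishing second-associator computation of Lemma~\ref{lem:CONB is ONB R}) is genuinely used to rule out cross terms; everything else is routine transfer from the real Hilbert space structure.
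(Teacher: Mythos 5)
Your proposal is correct, but it takes a genuinely different route from the paper's. The paper argues intrinsically in the octonionic setting: it first uses the octonionic Bessel identity (Lemma \ref{lem:Bessel} and Corollary \ref{cor:bessel ineq}) to show that only countably many coefficients are nonzero and that the partial sums $y_n=\sum_{j\leqslant n}\fx{x}{x_{\alpha_j}}x_{\alpha_j}$ form a Cauchy sequence, then identifies the limit with $x$ by testing against the real orthonormal basis $\widetilde S$ of Lemma \ref{lem:CONB is ONB R}, and finally lets $n\to\infty$ in the finite Bessel equality to obtain the norm identity. You instead use Lemma \ref{lem:CONB is ONB R} as the entire engine: apply the classical real Parseval theorem to $\widetilde S$ in $(H,\fx{\cdot}{\cdot}_\spr)$ and repackage the real coefficients via $\fx{x}{x_\alpha}=\sum_{i=0}^7\fx{x}{e_ix_\alpha}_\spr e_i$. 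This is valid and arguably cleaner, since it bypasses the octonionic Bessel machinery entirely, at the cost of the regrouping of the unconditionally convergent real expansion into blocks of eight, which you correctly flag and which is indeed harmless for an orthogonal series with square-summable coefficients; the step $\sum_i\fx{x}{e_ix_\alpha}_\spr(e_ix_\alpha)=\bigl(\sum_i\fx{x}{e_ix_\alpha}_\spr e_i\bigr)x_\alpha$ is licensed by the $\R$-linearity of $L:\O\to\End_{\R}H$. One small misattribution: the coefficient identity itself needs only para-linearity, since $\re[e_i,x,x_\alpha]=0$ together with \eqref{eq:<u,pv>=<u,v>p^+Ap(uv)} gives $\fx{x}{e_ix_\alpha}_\spr=\re\bigl(\fx{x}{x_\alpha}\overline{e_i}\bigr)$, the $i$-th real component of $\fx{x}{x_\alpha}$; weak associativity is not used there but only inside Lemma \ref{lem:CONB is ONB R} (to make $\widetilde S$ orthonormal) and in the uniqueness step, exactly as in the paper.
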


\begin{proof}
	In view of Cauchy-Schwarz inequality,  we get  $$\left|\fx{x}{x_\alpha}\right|\leqslant\fsh{x}$$
	for each $\alpha\in \Lambda$.
	Note that
	$$ \bigcup_{k=1}^\infty\big[\dfrac{1}{k+1}\fsh{x},\dfrac{1}{k}\fsh{x}\big]=(\; 0,\ \fsh{x}\; ].$$
	Thanks to Corollary \ref{cor:bessel ineq}, there are only countable $\alpha$, say $(\alpha_j)_{j=1}^\infty$, such that $\fx{x}{x_{\alpha_j}}\neq 0$. 
	Then  by Bessel's inequality \eqref{eqcor:bessel ineq} in Corollary \ref{cor:bessel ineq}, we have
	\begin{eqnarray}\label{eqpf:sum<inft}
	\sum_{j=1}^{\infty}\left|\fx{x}{x_{\alpha_j}}\right|^2< +\infty.
	\end{eqnarray}
	Set $$y_n=\sum_{j=1}^n\fx{x}{x_{\alpha_j}}x_{\alpha_j}.$$ For any $m<n$,  we have $$y_n-y_m=\sum_{j=m+1}^n\fx{x}{x_{\alpha_j}}x_{\alpha_j}.$$
	Since $S$ is a weak associative orthonormal basis, we get
	\begin{eqnarray*}
		\fx{y_n-y_m}{x_{\alpha_k}}&=&\sum_{j=m+1}^n\fx{\fx{x}{x_{\alpha_j}}x_{\alpha_j}}{x_{\alpha_k}}\\
		&=&\sum_{j=m+1}^n\fx{x}{x_{\alpha_j}}\fx{x_{\alpha_j}}{x_{\alpha_k}}+[{\fx{x}{x_{\alpha_j}}},x_{\alpha_j},x_{\alpha_k}]\\
		&=&\fx{x}{x_{\alpha_k}}.
	\end{eqnarray*}	
	It then follows from Corollary \ref{cor:bessel ineq} again that
	\begin{eqnarray*}		\fsh{y_n-y_m}^2&=&\sum_{k=m+1}^n\abs{\fx{y_n-y_m}{x_{\alpha_k}}}^2+\Big\|{(y_n-y_m)-\sum_{k=m+1}^n\fx{y_n-y_m}{x_{\alpha_k}} x_{\alpha_k}  }\Big\|^2\\		&=&\sum_{k=m+1}^n\abs{\fx{x}{x_{\alpha_k}}}^2+\Big\|{\sum_{j=m+1}^n\fx{x}{x_{\alpha_j}}x_{\alpha_j}-\sum_{k=m+1}^n\fx{x}{x_{\alpha_k}} x_{\alpha_k}  }\Big\|^2\\
		&=&\sum_{k=m+1}^n\abs{\fx{x}{x_{\alpha_k}}}^2.
	\end{eqnarray*}
	Thus we conclude from \eqref{eqpf:sum<inft} that $\{y_n\}_{n=1}^{\infty}$ is a Cauchy sequence. From the completeness of $H$, there exists an element $x'\in H$ such that
	$$\lim_{n\rightarrow+\infty}y_n= x'.$$
	
	We next prove that $x'=x$.
	In view of Lemma \ref{lem:CONB is ONB R}, it suffices to prove
	\begin{eqnarray}
	\fx{x-x'}{e_ix_{\alpha}}_{\spr}&=0
	\end{eqnarray}
	for each $\alpha\in \Lambda$ and $i=1,\dots,7$.
	For any $x_{\alpha_k}$,
	we get
	\begin{eqnarray*}
		\fx{x-x'}{e_ix_{\alpha_k}}_\R
		&=&\re\big(\fx{x-x'}{x_{\alpha_k}}\overline{e_i} +[{e_i},x-x',x_{\alpha_k}]\big)\\
		&=&\re\Big(\fx{x}{x_{\alpha_k}}\overline{e_i}
-\lim_{n\rightarrow+\infty}\sum_{j=1}^n\fx{\fx{x}{x_{\alpha_j}}
x_{\alpha_j}}{{x_{\alpha_k}}}\overline{e_i}\Big)\\
		&=&\re\Big(\fx{x}{x_{\alpha_k}}\overline{e_i}-\lim_{n\rightarrow+\infty}\sum_{j=1}^n\big(\fx{x}{x_{\alpha_j}}\fx{x_{\alpha_j}}{x_{\alpha_k}} \big)\overline{e_i}\Big) \\
		&=&0.
	\end{eqnarray*}
	For any $x_{\alpha}$ with $\alpha\neq \alpha_j,\ j=1,2,\dots,$  we also  have
	\begin{eqnarray*}
		\fx{x-x'}{e_ix_{\alpha}}_\R
		&=&\re\big(\fx{x-x'}{x_{\alpha}}\overline{e_i} +[{e_i},x-x',x_{\alpha}]\big)\\
		&=&\re\Big(\fx{x}{x_{\alpha}}\overline{e_i}
-\lim_{n\rightarrow+\infty}\sum_{j=1}^n\fx{\fx{x}{x_{\alpha_j}}
x_{\alpha_j}}{{x_{\alpha}}}\overline{e_i}\Big)\\
		&=&\re\Big(-\lim_{n\rightarrow+\infty}\sum_{j=1}^n\big(\fx{x}{x_{\alpha_j}}\fx{x_{\alpha_j}}{x_{\alpha}} \big)\overline{e_i}\Big) \\
		&=&0.
	\end{eqnarray*}
	%
	This proves  $x=x'$. Hence we have
	$$x=\sum_{j=1}^{\infty}\fx{x}{x_{\alpha_j}}x_{\alpha_j}=\sum_{\alpha\in \Lambda}\fx{x}{x_{\alpha}}x_{\alpha}.$$
	By Corollary \ref{cor:bessel ineq} we obtain	 $$\fsh{x}^2=\sum_{j=1}^n\Big|\fx{x}{x_{\alpha_j}}\Big|^2+
\Big\|{x-\sum_{j=1}^n\fx{x}{x_{\alpha_j}}x_{\alpha_j}}\Big\|^2$$
for any $n\in \mathbb{N}$.
	Letting $n\rightarrow \infty$, we get
	$$\fsh{x}^2=\sum_{j=1}^{\infty}\left|\fx{x}{x_{\alpha_j}}\right|^2=\sum_{\alpha\in \Lambda}|\fx{x}{x_{\alpha}}|^2.$$ The uniqueness of the decomposition follows from the condition  \eqref{eq:sons condition}. This completes the proof.
\end{proof}

\begin{rem}
	For any $\O$-Hilbert space, there always  exists a weak associative orthonormal   basis.
	This is because   there always exists an orthonormal   basis in a real Hilbert space.
	Note that by Theorem \ref{thm:tensor product}, an $\O$-Hilbert space $H$ can be decomposed into an orthogonal sum. Let $\{x_{\alpha}\}_{\alpha\in \Lambda}$ be an orthonormal   basis of the real Hilbert space $(\huaa{H},\fx{\cdot}{\cdot}_\R)$ and $\{x_{\beta}\}_{\beta\in \Lambda'}$ be an orthonormal   basis of the real Hilbert space $(\hua{A}{-}{H},\fx{\cdot}{\cdot}_\R)$. Then $$S:=\{x_\gamma\}_{\gamma\in \Lambda \cup \Lambda'}$$ is a weak associative orthonormal   basis.

\end{rem}

At last, we show that every orthonormal   basis for which the Parseval theorem holds is also weak associative.

\begin{mydef}
	{	An orthonormal   basis $S=\{x_{\alpha}\}_{\alpha\in \Lambda}$ is  said to be a Hilbert basis, if for
		any $x\in H$  it holds
		\begin{equation*}
		\fsh{x}^2=\sum_{\alpha\in \Lambda}|\fx{x}{x_{\alpha}}|^2.
		\end{equation*}
	}
	
\end{mydef}

\begin{thm}
	Let $S$ be an orthonormal   basis. Then $S$ is a Hilbert basis if and only if $S$ is weak associative.
	
\end{thm}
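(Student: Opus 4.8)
The plan is to treat the two implications separately. The forward implication, that a weak associative orthonormal basis is a Hilbert basis, is already established: it is exactly the content of the Parseval theorem (Theorem \ref{thm:Parseval thm}), which gives $\fsh{x}^2 = \sum_{\alpha\in\Lambda}|\fx{x}{x_\alpha}|^2$ for every $x \in H$ whenever $S$ is weak associative. So the real work is the converse.

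For the converse, I would assume $S = \{x_\alpha\}_{\alpha\in\Lambda}$ is a Hilbert basis and aim to verify the weak associativity condition $[q, x_k, x_\alpha] = 0$ for every $q \in \O$ and all indices $k, \alpha$. The key idea is to feed the well-chosen test vector $x = q x_k$ into the Parseval equality, since this isolates a single row of second associators at a time. First I would evaluate both sides. On the left, identity \eqref{eq:<pu,pv>in lemma} together with the alternativity $[q,q,x_k] = 0$ and the antisymmetry \eqref{eq:Ap(u,v)=-Ap(v,u)} (which forces $[q^2, x_k, x_k] = 0$) gives $\fsh{q x_k}^2 = (q\fx{x_k}{x_k})\overline{q} = |q|^2$, since $\fx{x_k}{x_k} = 1$. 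On the right, the definition of the second associator yields $\fx{q x_k}{x_\alpha} = q\fx{x_k}{x_\alpha} + [q, x_k, x_\alpha]$; orthonormality $\fx{x_k}{x_\alpha} = \delta_{k\alpha}$ then makes the diagonal term equal to $\fx{qx_k}{x_k} = q$ (using again $[q,x_k,x_k]=0$) and the off-diagonal terms equal to $\fx{qx_k}{x_\alpha} = [q, x_k, x_\alpha]$ for $\alpha \neq k$.

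Substituting into the Hilbert basis identity $\fsh{qx_k}^2 = \sum_\alpha |\fx{qx_k}{x_\alpha}|^2$ then produces
$$|q|^2 = |q|^2 + \sum_{\alpha \neq k} |[q, x_k, x_\alpha]|^2.$$
Hence $\sum_{\alpha\neq k}|[q, x_k, x_\alpha]|^2 = 0$, and since each summand is non-negative, every $[q, x_k, x_\alpha]$ with $\alpha \neq k$ vanishes; combined with $[q, x_k, x_k] = 0$ this gives $[q, x_k, x_\alpha] = 0$ for all $q$, $k$, $\alpha$, which is precisely weak associativity.

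I do not anticipate a genuine obstacle: the only point needing care is the legitimacy of comparing the series term by term, which is licensed because the Hilbert basis hypothesis guarantees the Parseval sum converges to $\fsh{qx_k}^2 = |q|^2$ while every off-diagonal summand is a nonnegative contribution. The one genuinely inventive step is the choice of test vector $q x_k$; everything else is a direct bookkeeping exercise with the associator identities recorded earlier in the paper.
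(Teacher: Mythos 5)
Your proposal is correct and follows essentially the same route as the paper: the forward implication is delegated to the Parseval theorem, and the converse is obtained by inserting the test vector $px_\beta$ (your $qx_k$) into the Parseval equality, expanding $\fx{px_\beta}{x_\alpha}=p\delta_{\alpha\beta}+[p,x_\beta,x_\alpha]$, and concluding that the nonnegative off-diagonal terms $\abs{[p,x_\beta,x_\alpha]}^2$ must all vanish. The only difference is that you spell out the justifications (alternativity and antisymmetry of the second associator) that the paper leaves implicit.
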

\begin{proof}
	In view of Theorem \ref{thm:Parseval thm}, it suffices to show that every  Hilbert basis  is weak associative.
	Let  $S=\{x_{\alpha}\}_{\alpha\in \Lambda}$ be a Hilbert basis. Then for  $x=px_\beta$, we have
	\begin{align*}
	\fsh{x}^2&=\sum_{\alpha\in \Lambda}\abs{\fx{px_\beta}{x_{\alpha}}}^2\\
	&=\sum_{\alpha\in \Lambda}\abs{(p\delta_{\alpha\beta}+[{p},x_\beta,x_\alpha])}^2\\
	&=\abs{p}^2+\sum_{\alpha\neq \beta}\abs{[{p},x_\beta,x_\alpha]}^2.
	\end{align*}
	It follows from $\fsh{x}=\abs{p}$ that $$\sum_{\alpha\neq \beta}\abs{[{p},x_\beta,x_\alpha]}^2=0.$$
	This implies that  $$[{p},x_\beta,x_\alpha]=0$$
	for all $x_\beta,x_\alpha\in S$. This proves that $S$ is weak associative.
\end{proof}

\section{Cayley-Dickson algebras as  $\O$-Hilbert spaces}

Any Cayley–Dickson algebra can be endowed with   a  weak $\O$-bimodule structure and an $\O$-inner product.

At first, we introduce the notion of  weak $\O$-bimodules.
	Let $M$ be a left $\O$-module. There  associates an involution
$$C: M\to M,$$
called \textbf{conjugate} map.
Since
 any $x\in M$ can be written as
	$$x=\sum_{i=0}^7e_ix_i+\sum_{i=0}^7e_ix^-_i,$$
	where $x_i\in \huaa{M}$ and $x^-_i\in \hua{A}{-}{M}$,  we define the conjugate of $x$ as
$$C(x)=\overline{x}:=\sum_{i=0}^7\overline{e_i}x_i-\sum_{i=0}^7e_ix^-_i.$$
Then by definition, we have
\begin{eqnarray*}\overline x
	=
	\begin{cases} x, \qquad & x\in \huaa{M},
		\\ \\
		-x, \qquad & x\in \hua{A}{-}{M}.
	\end{cases}
\end{eqnarray*}
This means   \begin{eqnarray}\label{eq:ChuaM=huaM}
C(\huaa{M})=\huaa{M},\quad C(\hua{A}{-}{M})=\hua{A}{-}{M}.
\end{eqnarray}

\begin{mydef}	We call $M$ a \textbf{weak $\O$-bimodule} if $M$ is both  a left $\O$-module and  a right $\O$-module satisfying the compatibility conditions
	\begin{eqnarray}\label{eq:-px=p-x-}
	\overline{px}=\overline{x}\ \overline{p}
	\end{eqnarray}
	for all $x\in M$ and $p\in \O$.
\end{mydef}
\begin{rem}
	The notion of a bimodule   for a class of algebras has been introduced by Eilenberg \cite{eilenberg1948extensions}. Following \cite{jacobson1954structure,Schafer1952repaltalg}, we call $M$
an $\O$-bimodule if
 $M$ is both  a left $\O$-module and  a right $\O$-module satisfying the compatibility conditions
	$$[p,q,x]=[x,p,q]=[q,x,p]$$ for all $p,q\in \O$ and $x\in M$.
	The regular bimodule $\O$ is known to be the only
	irreducible $\O$-bimodule \cite{jacobson1954structure,Schafer1952repaltalg}.
	 We remark that a weak $\O$-bimodule may not be   an $\O$-bimodule in general.
	
\end{rem}

\begin{rem}
		Let  $M$ be a weak $\O$-bimodule.
	\begin{enumerate}
		\item  	For all $p\in \O$ and $x\in M$, we have
	\begin{eqnarray}\label{eq:xp=p-x-}
	\overline{xp}=\overline{p}\ \overline{x}.
		\end{eqnarray}
Indeed,  it follows from \eqref{eq:-px=p-x-} that
		$$px=\overline{\overline{px}}=\overline{\overline{x}\ \overline{p}}$$ which becomes
		 (\ref{eq:xp=p-x-}) after replacing $p, x$ by $\overline p, \overline x$, respectively.
	
\item 		The right multiplication of $M$ is determined by its left multiplication
due to  \eqref{eq:xp=p-x-}.
Conversely, every left $\O$-module can be viewed as a weak $\O$-bimodule by endowed with such right multiplication. That is, a weak $\O$-bimodule is nothing but a left $\O$-module endowed with a  certain right multiplication.
\item  We denote the left nucleus by $$\mathscr{A}_{L} (M):=\{x\in M \mid [p,q,x]=0, \forall p,q\in \O\}$$ and the right nucleus by  $$\mathscr{A}_{R}(M):=\{x\in M \mid [x,p,q]=0, \forall p,q\in \O\}.$$
Similar notations $\mathscr{A}^-_{L}(M),\ \mathscr{A}^-_{R}(M)$ can be also defined.
One can check that $$\mathscr{A}_{R}(M)=C(\mathscr{A}_{L}(M)),\quad \mathscr{A}^-_{R}(M)=C(\mathscr{A}^-_{L}(M)).$$
Combining this with \eqref{eq:ChuaM=huaM}, we have
\begin{eqnarray}\label{eq:huaar}
\mathscr{A}_{R}(M)=\mathscr{A}_{L}(M),\quad \mathscr{A}^-_{R}(M)=\mathscr{A}^-_{L}(M).
\end{eqnarray}
Henceforth we shall denote
\begin{eqnarray*}
\mathscr{A}(M):=\mathscr{A}_{R}(M)=\mathscr{A}_{L}(M),\quad \mathscr{A}^-(M):=\mathscr{A}^-_{R}(M)=\mathscr{A}^-_{L}(M).
\end{eqnarray*}
\end{enumerate}

\end{rem}
\bigskip

We next consider the algebra of sedenions. The algebra of sedenions $\mathbb{S}$ is constructed from  the algebra of octonions through    the Cayley–Dickson construction \cite  {Dickson1919cdalg}.
The elements in the algebra of sedenions  $\mathbb{S}$ take  the form
$$s =\sum_{i=0}^{15}
x_ie_i,$$
where $x_i$ are reals, $e_0 = 1$ and $e_1, e_2, \dots, e_{15}$ are imaginary units, i.e.,  $$e_i^2=-1$$ for all $i=1,\dots,15$.
The conjugate of $s$ is defined as
$$\overline{s}=x_0-\sum_{i=1}^{15}
x_ie_i.$$
We regard   $\O$   as the subalgebra  of  $\mathbb{S}$
generated by $e_0,e_1, \dots , e_7$. By the  Cayley-Dickson construction,  any $s\in \mathbb{S}$ can be expressed uniquely as $$s=a+be_8$$ with $a,b\in \O$ so that
 the conjugate of $s$ becomes
$$\overline{s}=\overline{a}-be_8$$ and
the multiplication   in $\mathbb{S}$ is defined as
\begin{eqnarray}\label{eq:cd prod}
(a+be_8)(c+de_8)=(ac-\overline{d}b)+(da+b\overline{c})e_8.
\end{eqnarray}

We want to show that $\mathbb{S}$ is a weak $\O$-bimodule.
At first, we show that $\mathbb{S}$ is a left $\O$-module with multiplication
\begin{eqnarray*}
\O\times \mathbb{S}&\to& \mathbb{S}\\
(p,x)&\mapsto&px.
\end{eqnarray*}
Indeed, it follows from \eqref{eq:cd prod} that
\begin{eqnarray}\label{eq:e8}
a(de_8)=(da)e_8
\end{eqnarray}
for all $a,d\in \O$.
Hence for all $p\in \O$ and $x=a+
be_8\in \mathbb{S}$, we have
$$p(px)=p(pa+(bp)e_8)=p^2a+(bp^2)e_8=p^2x.$$
This shows that $\mathbb{S}$ is a left $\O$-module.

	One can easily check  that the conjugate of $s$ in sedenions coincides with the  conjugate of $s$ when $\mathbb{S}$  is regarded   as a left $\O$-module. Hence the right multiplication induced by
\eqref{eq:-px=p-x-}
 is
	\begin{eqnarray*}
		\mathbb{S}\times\O &\to& \mathbb{S}\\
		(x,p)&\mapsto&\overline{\overline{p}\ \overline{x}}=xp.
	\end{eqnarray*}

 It can be seen from \eqref{eq:e8} that $$e_8\in \hua{A}{-}{\mathbb{S}}.$$ Therefore we get $$\mathbb{S}\cong \O\oplus \overline{\O}$$ as left   $\O$-modules.

We now define $$\fx{a+be_8}{c+de_8}_{\mathbb{S}}:=a\overline{c}+\overline{d}b.$$In other words, if we denote by $$\pi_{\O}:\mathbb{S}\to \O$$ the projection from $\mathbb{S}$ to $\O$, then for any $s,w\in \mathbb{S}$,
$$\fx{s}{w}_{\mathbb{S}}:=\pi_{\O}(s\overline{w}).$$
One can check that this is an $\O$-inner product.

We move on our discussion to arbitrary
 Cayley–Dickson algebra $\mathbb{A}_n$.    For $n=1,2,3$, $$ \mathbb{A}_1=\spc, \quad \mathbb{A}_2=\mathbb{H}, \quad \mathbb{A}_3=\O.$$
  We shall apply the induction  to endow  $\mathbb{A}_n$ with a  Hilbert $\O$-module structure.

The  Cayley-Dickson algebra  $\mathbb{A}_n$ is defined by induction.
An element $s$ in $\mathbb{A}_n$    can be written as $$s=a+be_{2^{n-1}},$$ where $a,b\in \mathbb{A}_{n-1}$.
 Its
conjugate   is defined as
$$\overline{a+be_{2^{n-1}}}=\overline{a}-be_{2^{n-1}}.$$
 the multiplication of $\mathbb{A}_n$  is defined as
\begin{eqnarray}\label{eq:cdn prod}
(a+be_{2^{n-1}})(c+de_{2^{n-1}})=(ac-\overline{d}b)+(da+b\overline{c})e_{2^{n-1}},
\end{eqnarray}
	where $a,b,c,d\in \mathbb{A}_{n-1}$.

As a real  vector space,
 $\mathbb{A}_n$ admits  a canonical basis
$$e_0,e_1,\dots,e_{2^n-1},$$
where
\begin{eqnarray}\label{eq:e2n+j}
e_{2^{n-1}+j}:=e_je_{2^{n-1}}
\end{eqnarray} for any $j=1,\dots, 2^{n-1}-1$.
The octonions $\O$ can be  viewed as a subset of  $\mathbb{A}_n$ when $n\geqslant 4$ generated    by $e_0,e_1, \cdots, e_7$.

The left $\O$-module structure on $\mathbb{A}_n$ is induced by   the multiplication of $\mathbb{A}_n$:\begin{eqnarray*}
	\O\times \mathbb{A}_n&\to& \mathbb{A}_n\\
	(p,x)&\mapsto&px.
\end{eqnarray*}
It turns out that $\mathbb{A}_n$ is a  weak $\O$-bimodule for $n\geqslant4$.
\begin{lemma}
If $n\geqslant 4$,  then	$\mathbb{A}_n$ is a weak $\O$-bimodule and the two    conjugates coincide when  $\mathbb{A}_n$  is regarded as either a    weak $\O$-bimodule or a  Cayley–Dickson algebra. Moreover, we have an isomorphism of left $\O$-modules  $$\mathbb{A}_n\cong \O^{2^{n-4}}\oplus \overline{\O}^{2^{n-4}}.$$
\end{lemma}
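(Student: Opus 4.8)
The plan is to induct on $n$, with the sedenion case $\mathbb{A}_4=\mathbb{S}$ (where $2^{4-4}=1$ and $\mathbb{S}\cong\O\oplus\overline{\O}$) serving as the base case, all three assertions having been checked in the preceding discussion. For the inductive step I assume the statement for $\mathbb{A}_{n-1}$ with $n\geqslant 5$ and exploit the Cayley–Dickson splitting $\mathbb{A}_n=\mathbb{A}_{n-1}\oplus\mathbb{A}_{n-1}e_{2^{n-1}}$ together with the product rule \eqref{eq:cdn prod}.

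First I would record the left action. For $p\in\O\subseteq\mathbb{A}_{n-1}$ and $s=a+be_{2^{n-1}}$, formula \eqref{eq:cdn prod} gives $ps=pa+(bp)e_{2^{n-1}}$, so $\O$ acts by left multiplication on the first summand and by right multiplication (followed by $e_{2^{n-1}}$) on the second. To confirm that $\mathbb{A}_n$ is a left $\O$-module I verify $p(ps)=p^2s$: on the first summand this is left-alternativity in $\mathbb{A}_{n-1}$, while on the second it reduces to $(bp)p=bp^2$. The latter follows from the induction hypothesis that $\mathbb{A}_{n-1}$ is a weak $\O$-bimodule, writing $xp=\overline{\overline p\,\overline x}$ and combining left-alternativity of $\overline p$ with the anti-involution property $\overline{bp}=\overline p\,\overline b$.

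For the weak-bimodule claim I would show directly from \eqref{eq:cdn prod}, by the standard doubling computation (inductive on $n$), that the Cayley–Dickson conjugate is an anti-involution $\overline{st}=\overline t\,\overline s$. Specializing $s=p\in\O$ yields exactly the compatibility $\overline{px}=\overline x\,\overline p$ of \eqref{eq:-px=p-x-}, so the algebra's own right multiplication realizes $\mathbb{A}_n$ as a weak $\O$-bimodule whose involution is the Cayley–Dickson conjugate; this is the content of the coincidence of conjugates. The delicate point is to match this with the intrinsic conjugate built from the nucleus decomposition \eqref{eq:ChuaM=huaM}, which I would handle by computing $\huaa{\mathbb{A}_n}$ and $\hua{A}{-}{\mathbb{A}_n}$ explicitly in the induction and checking agreement piece by piece.

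Finally, the module decomposition. The first summand carries its usual action, so $\mathbb{A}_{n-1}\cong\O^{2^{n-5}}\oplus\overline{\O}^{2^{n-5}}$ by induction. For the second summand I would exhibit the isomorphism $\Phi\colon\overline{\mathbb{A}_{n-1}}\to\mathbb{A}_{n-1}e_{2^{n-1}}$, $\Phi(b)=\overline b\,e_{2^{n-1}}$, from the conjugate module; the required identity $\Phi(\overline p\,b)=p\,\Phi(b)$ is precisely the anti-involution relation $\overline{\overline p\,b}=\overline b\,p$ supplied by the induction hypothesis, and this verification is the technical heart of the proof. Since conjugation interchanges $\O$ and $\overline{\O}$, induction gives $\overline{\mathbb{A}_{n-1}}\cong\overline{\O}^{2^{n-5}}\oplus\O^{2^{n-5}}$, and summing the two pieces produces $\mathbb{A}_n\cong\O^{2^{n-4}}\oplus\overline{\O}^{2^{n-4}}$. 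The main obstacle is this second-summand isomorphism: converting the right multiplication coming from \eqref{eq:cdn prod} into a genuine left action on the conjugate module is exactly where the inductive weak-bimodule structure of $\mathbb{A}_{n-1}$ is indispensable, and it is also what ties together all three assertions of the lemma.
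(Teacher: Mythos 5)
Your proposal is correct and follows essentially the same route as the paper: induction on the Cayley--Dickson doubling $\mathbb{A}_{n}=\mathbb{A}_{n-1}\oplus\mathbb{A}_{n-1}e_{2^{n-1}}$ with base case $\mathbb{A}_4=\mathbb{S}\cong\O\oplus\overline{\O}$, using the product rule to get $p(qe_{2^{n-1}})=(qp)e_{2^{n-1}}$ and the inductive right-module structure to verify $p\bigl(p(xe_{2^{n-1}})\bigr)=((xp)p)e_{2^{n-1}}=p^{2}(xe_{2^{n-1}})$. The only presentational difference is in identifying the second summand: you exhibit the explicit isomorphism $b\mapsto\overline{b}\,e_{2^{n-1}}$ from the conjugate module $\overline{\mathbb{A}_{n-1}}$, whereas the paper computes the nuclei via $\mathscr{A}_{L}(\mathbb{A}_{n-1}e_{2^{n-1}})=\mathscr{A}^{-}_{R}(\mathbb{A}_{n-1})e_{2^{n-1}}$ and $\mathscr{A}^{-}_{L}(\mathbb{A}_{n-1}e_{2^{n-1}})=\mathscr{A}_{R}(\mathbb{A}_{n-1})e_{2^{n-1}}$ and then applies the classification of left $\O$-modules; both yield the same conclusion.
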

\begin{proof} When $n=4$, we have $\mathbb{A}_4=\mathbb S$ so the result  has been proved. Suppose it holds for $n$, we come to prove the case of  $n+1$.
	
	It follows from \eqref{eq:cdn prod} that $$p(qe_{2^n})=(qp)e_{2^n}$$ for all $p,q\in \O$ and $n\geqslant 4$.
	
		Now we check that the subset $\mathbb{A}_n e_{2n}$ of $\mathbb{A}_{n+1}$ is also a left $\O$-module as well as a right $\O$-module.
	For any $p\in \O$ and $x\in \mathbb{A}_n$,
	\begin{eqnarray}\label{eqpf:ppxe}
	p(p(xe_{2^n}))=p((xp)e_{2^n})=((xp)p)e_{2^n}.
	\end{eqnarray}
	 By induction,  $\mathbb{A}_n$ is a right $\O$-module so that $(xp)p=xp^2$. Therefore \eqref{eqpf:ppxe} becomes
	 $$	p(p(xe_{2^n}))=(xp^2)e_{2^n}=p^2(xe_{2^n}).$$
	 This shows that  $\mathbb{A}_ne_{2^n}$ is a  left $\O$-module. It can be proved that $\mathbb{A}_ne_{2^n}$ is a  right $\O$-module similarly.

	
	 Moreover, it is easy to check that  \begin{eqnarray}\label{eqpf:cd2}
	 \mathscr{A}_{L}{(\mathbb{A}_ne_{2^n})}=\mathscr{A}^-_{R}({\mathbb{A}_n})e_{2^n},\quad \mathscr{A}^-_{L}{(\mathbb{A}_ne_{2^n})}=\mathscr{A}_{R}{(\mathbb{A}_n)}e_{2^n}.
	 \end{eqnarray}
	 Combining  \eqref{eqpf:cd2} and \eqref{eq:huaar}, we conclude that
	 \begin{eqnarray}\label{eqpf:cd3}
	 \mathscr{A} {(\mathbb{A}_ne_{2^n})}=\mathscr{A}^-({\mathbb{A}_n})e_{2^n},\quad \mathscr{A}^-{(\mathbb{A}_ne_{2^n})}=\mathscr{A}
{(\mathbb{A}_n)}e_{2^n}.
	 \end{eqnarray}
	 By induction, as left modules $$\mathbb{A}_n\cong \O^{2^{n-4}}\oplus \overline{\O}^{2^{n-4}}.$$
	 This implies that
	 $$\mathbb{A}_ne_{2^n}\cong \O^{2^{n-4}}\oplus \overline{\O}^{2^{n-4}}.$$
	 Hence we obtain
	 \begin{eqnarray*}
	 \mathbb{A}_{n+1}&=&\mathbb{A}_n\oplus \mathbb{A}_ne_{2^n}\\
	 &\cong& \O^{2^{n+1-4}}\oplus \overline{\O}^{2^{n+1-4}}.
	 \end{eqnarray*}
	 This completes the proof.
\end{proof}

We finally define the $\O$-inner product on $\mathbb{A}_n$. We    denote by $$\pi_{\O}:\mathbb{A}_n\to \O$$ the  projection.
For any $s,w\in \mathbb{A}_n$, we define
\begin{eqnarray}
\fx{s}{w}_n:=\pi_{\O} (s\overline{w})
\end{eqnarray}

\begin{thm}\label{thm:cd-Hilbert-270}
$(\mathbb{A}_n,\fx{\cdot}{\cdot}_n)$ is a Hilbert left $\O$-module for any $n\geqslant 4$.	
\end{thm}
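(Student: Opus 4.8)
The plan is to verify that $\fx{\cdot}{\cdot}_n$ satisfies the three axioms of Definition \ref{def:hilbert O space}, after which completeness comes for free: $\mathbb{A}_n$ is a finite-dimensional real vector space and is therefore complete for the norm induced by $\re\fx{\cdot}{\cdot}_n$. Real bilinearity of $\fx{\cdot}{\cdot}_n$ is immediate, since $\pi_\O$, the multiplication of $\mathbb{A}_n$, and the conjugation $s\mapsto\overline{s}$ are all $\spr$-linear. Throughout I would invoke three standard features of the Cayley--Dickson algebra $\mathbb{A}_n$, each provable by induction on $n$ from the recursion \eqref{eq:cdn prod}: the conjugation is an anti-automorphism, $\overline{st}=\overline{t}\,\overline{s}$; the norm identity $s\overline{s}=\overline{s}s\in\spr^+$ holds, vanishing only at $s=0$; and $\re\big((xy)z\big)=\re\big(x(yz)\big)$ for all $x,y,z\in\mathbb{A}_n$, i.e. the associator of $\mathbb{A}_n$ is purely imaginary.

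First I would dispatch positivity and hermiticity. For positivity, $s\overline{s}$ equals the sum of squares of the coordinates of $s$, hence lies in $\spr e_0\subseteq\O$; therefore $\fx{s}{s}_n=\pi_\O(s\overline{s})=s\overline{s}\in\spr^+$, which is $0$ exactly when $s=0$. For hermiticity I would first note that $\pi_\O$ commutes with conjugation, because conjugation fixes $e_0$ and negates $e_1,\dots,e_{2^n-1}$ while $\O=\operatorname{span}_\spr\{e_0,\dots,e_7\}$, so $\overline{\pi_\O(t)}=\pi_\O(\overline{t})$ for every $t\in\mathbb{A}_n$. Combining this with the anti-automorphism property gives
\[\overline{\fx{w}{s}_n}=\overline{\pi_\O(w\overline{s})}=\pi_\O\big(\overline{w\overline{s}}\big)=\pi_\O\big(s\overline{w}\big)=\fx{s}{w}_n.\]

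The crux is $\O$-para-linearity, i.e. $\re\big(\fx{ps}{w}_n-p\fx{s}{w}_n\big)=0$ for all $p\in\O$ and $s,w\in\mathbb{A}_n$, which I would establish in two reductions. Since $\pi_\O$ preserves the $e_0$-component one has $\re\circ\pi_\O=\re$ on $\mathbb{A}_n$, so $\re\fx{ps}{w}_n=\re\big((ps)\overline{w}\big)$, and by the vanishing of the real part of the associator this equals $\re\big(p(s\overline{w})\big)$. For the second reduction I would write the standard real inner product of $\mathbb{A}_n$ as $\re(a\overline{b})$ and observe that $\re(px)=\re\big(p\,\pi_\O(x)\big)$ for every $p\in\O$ and $x\in\mathbb{A}_n$: indeed $\re(px)$ pairs $p$ against $\overline{x}$, and since $p\in\O$ only the part $\pi_\O(\overline{x})=\overline{\pi_\O(x)}$ contributes, the remaining basis vectors $e_8,\dots,e_{2^n-1}$ being real-orthogonal to $\O$. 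Taking $x=s\overline{w}$ yields $\re\big(p(s\overline{w})\big)=\re\big(p\,\pi_\O(s\overline{w})\big)=\re\big(p\fx{s}{w}_n\big)$, and the two reductions combine to give para-linearity. (As a byproduct one sees $\re\fx{s}{w}_n=\re(s\overline{w})$ is just the Euclidean inner product, reconfirming that $\mathbb{A}_n$ is a real Hilbert space.)

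The main obstacle is precisely this last axiom, and within it the only genuinely non-formal ingredient is the identity $\re\big((xy)z\big)=\re\big(x(yz)\big)$: unlike in $\O$, one cannot appeal to alternativity in $\mathbb{A}_n$, so I would record a short induction on the Cayley--Dickson level (or cite \cite{harvey1990spinors}) showing that the associator is purely imaginary. Everything else---finite-dimensional completeness, bilinearity, the norm identity $s\overline{s}\in\spr^+$, and the compatibility of $\pi_\O$ with conjugation---is routine once the canonical basis $\{e_i\}$ and the recursion \eqref{eq:cdn prod} are in hand.
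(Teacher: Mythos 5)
Your proposal is correct, but it takes a genuinely different route from the paper. The paper argues by induction on $n$: writing $s=a+be_{2^{n-1}}$, $w=c+de_{2^{n-1}}$, it derives the recursion $\fx{s}{w}_n=\fx{a}{c}_{n-1}+\fx{\overline{d}}{\overline{b}}_{n-1}$ and then expresses $\fx{ps}{w}_n-p\fx{s}{w}_n$ as a combination of second associators of the $\O$-Hilbert space $\mathbb{A}_{n-1}$ plus a commutator, all of which have vanishing real part by the inductive hypothesis; hermiticity and positivity are likewise inherited from $\mathbb{A}_{n-1}$. You instead verify the axioms directly in $\mathbb{A}_n$ from three global facts about Cayley--Dickson algebras --- conjugation is an anti-automorphism, $s\overline{s}=\sum_i s_i^2$, and the trace-associativity $\re\big((xy)z\big)=\re\big(x(yz)\big)$ --- combined with the elementary observations $\re\circ\pi_\O=\re$ and $\re(px)=\re\big(p\,\pi_\O(x)\big)$ for $p\in\O$. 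The only non-formal ingredient on your side is trace-associativity, which you correctly flag as requiring its own induction on the doubling level (it does hold in every $\mathbb{A}_n$, and your orthogonality claim $\re(e_ie_j)=\pm\delta_{ij}$ also holds since the canonical basis is closed under multiplication up to sign). Your approach buys a cleaner conceptual picture --- in particular it makes explicit that $\re\fx{s}{w}_n=\re(s\overline{w})$ is exactly the Euclidean inner product, so the real Hilbert space structure is manifest --- whereas the paper's induction buys the recursive formula for $\fx{\cdot}{\cdot}_n$ and keeps the computation inside the second-associator calculus it has already developed, which is what it reuses when identifying $\mathscr{A}(\mathbb{A}_n)$ and $\mathscr{A}^-(\mathbb{A}_n)$. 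Both arguments are sound.
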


\begin{proof}
	We have shown that $\fx{s}{w}_4$ is an $\O$-inner product. Suppose $\fx{s}{w}_{n-1}$ is also an $\O$-inner product on $\mathbb{A}_{n-1}$. We denote by $$\pi_{n-1}:\mathbb{A}_n\to \mathbb{A}_{n-1}$$ the orthogonal projection.
	Then for any  $s=a+be_{2^{n-1}},w=c+de_{2^{n-1}}\in \mathbb{A}_{n}$, we have
	\begin{eqnarray*}
		\fx{s}{w}_n&=&\pi_{\O}(s\overline{w})\\
		&=&\pi_3\pi_4\dots \pi_{n-1}(s\overline{w})\\
		&=&\pi_3\pi_4\dots \pi_{n-2}(a\overline{c}+\overline{d}b)\\
		&=&\fx{a}{c}_{n-1}+\fx{\overline{d}}{\overline{b}}_{n-1}.
	\end{eqnarray*}
	Note that for any $p\in \O$, $$ps=pa+(bp)e_{2^{n-1}}.$$
	Hence we have
	\begin{eqnarray*}
		\fx{ps}{w}_n-p\fx{s}{w}_n&=&\fx{pa}{c}_{n-1}+
\big\langle{\overline{d}}, \ \ {\overline{(bp)}}_{n-1}\big\rangle-p\big(\fx{a}{c}_{n-1}+\fx{\overline{d}}
{\overline{b}}_{n-1}\big)\\
		&=&[p,a,c]_{n-1}+[\fx{\overline{d}}{\overline{b}}_{n-1},p]-[p,\overline{d},\overline{b}]_{n-1}.
	\end{eqnarray*}
	Here $[p,a,c]_{n-1}$ and $[p,\overline{d},\overline{b}]_{n-1}$ are second associators in the $\O$-Hilbert space $\mathbb{A}_{n-1}$. Hence by induction we obtain
	$$\re\left(\fx{ps}{w}_n-p\fx{s}{w}_n\right)=0.$$
	This proves the para-linearity.

	To prove the hermiticity,
	we apply the inductive  assumption to get
	$$\fx{w}{s}_n=\fx{c}{a}_{n-1}+\fx{\overline{b}}{\overline{d}}_{n-1}=\overline{\fx{a}{c}_{n-1}}+\overline{\fx{\overline{d}}{\overline{b}}_{n-1}}=\overline{\fx{s}{w}_n}.$$
	
	At last, we come to prove the   positivity.
	According to the inductive  assumption, we have
	$$\fx{s}{s}_n=\fx{a}{a}_{n-1}+\fx{\overline{b}}{\overline{b}}_{n-1}\geqslant 0$$ and $$\fx{s}{s}_n=0\iff a=b=0\iff s=0.$$
	This shows the positivity.
\end{proof}

Theorem \ref{thm:cd-Hilbert-270} provides us a new approach to study the Cayley–Dickson algebras. We point out that \eqref{eqpf:cd3}  gives a complete description  of the associative and conjugate associative elements of  $\mathbb{A}_{n}$ by induction. More precisely,
 \begin{eqnarray*}
	\mathscr{A} {(\mathbb{A}_{n+1})}&=& \mathscr{A} (\mathbb{A}_n )\oplus\mathscr{A}^-({\mathbb{A}_n})e_{2^n},\\
	\mathscr{A}^- {(\mathbb{A}_{n+1})}&=& \mathscr{A}^- (\mathbb{A}_n )\oplus\mathscr{A}({\mathbb{A}_n})e_{2^n}.
 	 \end{eqnarray*}

For example, we have  	
\bigskip

\begin{center}
\renewcommand\arraystretch{2}
	\begin{tabular}{|c|c|c|}
		\hline $\mathbb{A}_{n}$&a basis of $\mathscr{A}{(\mathbb{A}_n)}$&a basis of $\mathscr{A}{(\mathbb{A}_n)}$\\
		\hline $\mathbb{A}_{4}$&$e_0$&$e_8$\\
		\hline $\mathbb{A}_{5}$&$e_0,\ {e_8}e_{16}$&$e_8,e_{16}$\\
		\hline $\mathbb{A}_{6}$&$e_0,\ {e_8}e_{16},\ {e_8}e_{32},\ {e_{16}}e_{32}$&$e_8,e_{16}, e_{32},\  {({e_8}e_{16})}e_{32}$\\
		\hline
	\end{tabular}
\end{center}
\bigskip


Let us look at a weak associative orthonormal basis in the Cayley-Dickson algebra as a Hilbert  left $\O$-module.

In the specific case $n=5$, the algebra  $\mathbb{A}_{n}=\mathbb{A}_{5}$ admits a  weak associative orthonormal   basis $$S:=\{e_0,\ e_{24},e_8,e_{16}\}.$$ This means  for any $x\in \mathbb{A}_{5}$,
$$x=\pi_{\O}(x)+\pi_{\O}(x\overline{e_{8}})e_{8}+\pi_{\O}(x\overline{e_{16}})e_{16}+\pi_{\O}(x\overline{e_{24}})e_{24}.$$

In the   general $\mathbb{A}_{n}$,  due to \eqref{eq:e2n+j}  we can take a weak associative orthonormal   basis as
$$S:=\{e_{8i}:\mid i=0,\dots, 2^{n-3}-1\}.$$
Any $x\in\mathbb A_n$ can be expressed in terms of this weak associative orthonormal   basis as
\begin{eqnarray}\label{def:exp-basis-271}
x=\sum_{i=0}^{2^{n-3}-1}\pi_{\O}(x\overline{e_{8i}})e_{8i}.\end{eqnarray}
This can be regarded as an octonionic version of the decomposition
\begin{eqnarray} \label{def:exp-basis-278}
x=\sum_{i=0}^{2^{n}-1}\re(x\overline{e_{i}})e_i.
\end{eqnarray}


\begin{rem} We have shown that the Cayley-Dickson algebra $\mathbb{A}_{n}$ is a Hilbert left $\O$-module. However the corresponding result does not hold in the quaternionic case. That is,
		$\mathbb{A}_{n}$ is not a $\mathbb{H}$-vector space in general. This is because the definition of $\mathbb{H}$-vector space (see \cite{ng2007quaternionic}) requires an associative condition
		$$p(qx)=(pq)x$$ for all $p,q\in \mathbb{H}$ and   any element $x$ of that $\mathbb{H}$-vector space. \end{rem}

%


	\bibliographystyle{plain}


\bigskip\bigskip\bigskip

\end{document}